\newtheorem{thm}{Theorem}[section]
\theoremstyle{definition}
\newcommand{\bal}{{\mbox{\boldmath$\alpha$}}}
\newtheorem{Def}[thm]{Definition}
\newtheorem{example}[thm]{Example}
\newtheorem{rem}[thm]{Remark}
\theoremstyle{plain}
\newtheorem{prop}[thm]{Proposition}
\newtheorem{lem}[thm]{Lemma}
\newtheorem{conj}[thm]{Conjecture}
\numberwithin{equation}{section}
\newcommand{\Hom}{\mathrm{Hom}}
\newcommand{\g}{\mathfrak{g}}
\newcommand{\gl}{\mathfrak{gl}}
\newcommand{\KK}{\mathbb K}
\newcommand{\ov}{\overline}
\newcommand{\Z}{\mathbb Z}
\newcommand{\extp}{\@ifnextchar^\@extp{\@extp^{\,}}}
\def\@extp^#1{\mathop{\bigwedge\nolimits^{\!#1}}}
\DeclareMathOperator*{\bigboxtimes}{\raisebox{-0.25ex}{\scalebox{1.5}{$\boxtimes$}}}
\newcommand*{\@rowstyle}{}
\newcommand*{\rowstyle}[1]{
  \gdef\@rowstyle{#1}%
  \@rowstyle\ignorespaces%
}
\newcolumntype{=}{
  >{\gdef\@rowstyle{}}%
}
\newcolumntype{+}{
  >{\@rowstyle}%
}
\newcommand{\un}{\mathbbm{1}}
\DeclareMathOperator*{\ad}{ad}
\DeclareMathOperator*{\Rep}{Rep}
\DeclareMathOperator*{\Vecc}{Vec}
\DeclareMathOperator*{\sVec}{sVec}
\DeclareMathOperator{\Ver}{Ver}
\DeclareMathOperator*{\Tilt}{Tilt}
\DeclareMathOperator*{\Lie}{Lie}
\DeclareMathOperator*{\Dist}{Dist}
\DeclareMathOperator*{\Aut}{Aut}
\DeclareMathOperator*{\Fr}{Fr}
\DeclareMathOperator*{\Irr}{Irr}
\title[The Steinberg Tensor Product Theorem for $GL(X)$ in $\Ver_p$]{The Steinberg Tensor Product Theorem for General Linear Group Schemes in the Verlinde Category}
\author[A.S. Kannan]{Arun S. Kannan}%
\address{Department of Mathematics, Massachusetts Institute of Technology, Cambridge, MA 02139} \email{akannan@mit.edu}
\begin{document}

\begin{abstract}
The Steinberg tensor product theorem is a fundamental result in the modular representation theory of reductive algebraic groups. It describes any finite-dimensional simple module of highest weight $\lambda$ over such a group as the tensor product of Frobenius twists of simple modules with highest weights the weights appearing in a $p$-adic decomposition of $\lambda$, thereby reducing the character problem to a a finite collection of weights. In recent years this theorem has been extended to various quasi-reductive supergroup schemes. In this paper, we prove the analogous result for the general linear group scheme $GL(X)$ for any object $X$ in the Verlinde category $\Ver_p$. 
\end{abstract}

\maketitle

\setcounter{tocdepth}{1}
\tableofcontents

\section{Introduction}
\subsection{The Steinberg tensor product theorem for quasi-reductive supergroups}
Let $\KK$ be an algebraically closed field of characteristic $p > 0$, and let $G$ be a reductive algebraic group over $\KK$ with maximal torus $T$ and $r$-th Frobenius kernel $G_{r}$. Recall that the set of dominant integral weights $\Lambda(T)_+$ for $G$ indexes the finite-dimensional simple modules over $G$ (up to isomorphism), which we will denote $L(\lambda)$ for $\lambda \in \Lambda(T)_+$. Moreover, $\Lambda(T)_+$ admits a filtration by finite sets $\Lambda_1(T) \subset \Lambda_2(T) \subset \cdots \subset \Lambda(T)_+$, where $\Lambda_r(T)$ indexes the set of finite-dimensional simple $G_r$-modules for each $r \geq 1$ (the modules themselves are given by restricting the corresponding $G$-module to $G_r$). The \textit{Steinberg tensor product theorem} states that for any $\lambda \in \Lambda_r(T)$ and $\mu \in \Lambda(T)_+$ we have

\[L(\lambda + p^r \mu) \cong L(\lambda) \otimes L(\mu)^{[r]},\]
where the operation $[r]$ on a $G$-module denotes the $r$-th Frobenius twist of that module. By induction we can show that if $\lambda$ admits a $p$-adic decomposition $\lambda = \sum_{i=0}^r p^i \lambda_i$ with $\lambda_i \in \Lambda_1(T)$ for all $i$, then

\[L(\lambda) \cong L(\lambda_0)\otimes L(\lambda_1)^{[1]} \otimes \cdots \otimes L(\lambda_r)^{[r]},\]
which is how Steinberg originally phrased it (see \cite{steinberg1963} or \cite[\S II.3.17]{jantzen2003representations}). The most important point of this theorem is that it reduces the unsolved problem of computing the character of $L(\lambda)$ for $\lambda \in \Lambda(T)_+$ to determining that of $L(\mu)$ for $\mu \in \Lambda_1(T)$.
\par
Similar results hold for various \textit{quasi-reductive} supergroup schemes, meaning the underlying ordinary (more traditionally referred to as \textit{even}) group $G_0$ is a reductive group. In particular, analogs of the Steinberg tensor product theorem were proven for $GL(m|n)$, $Q(n)$, and $OSp(m|2n)$ (see \cite{kujawa2006steinberg,BRUNDAN_Qn,binwang_orthosymplectic}, respectively), and later generalized to a broader a class of quasi-reductive supergroups in \cite{shibata_frobenius_2023}. Let us state the theorem for the general linear supergroup $G = GL(m|n)$. The Frobenius morphism will send $G$ to the underlying ordinary subgroup $G_0 = GL(m) \times GL(n)$, meaning any representation $V$ of the underlying ordinary group pulls back to a representation $F^*V$ of $G$ via the Frobenius morphism.  Moreover, finite-dimensional simple $G$-modules $\{L(\lambda)\}$ and finite-dimensional simple $G_0$-modules $\{L_0(\lambda)\}$ are indexed by the same set of weights $\Lambda(T)_+$ (easily seen by parabolic induction). The Steinberg tensor product theorem for $GL(m|n)$ states that for any $\lambda \in \Lambda_1(T)$ and $\mu \in \Lambda(T)_+$ we have 

\[L(\lambda + p\mu) \cong L(\lambda)\otimes F^* L_0(\mu).\]

\subsection{Generalizing to the Verlinde category}
Symmetric tensor categories arise as an axiomatization of the properties of representation categories of finite groups (see \cite{etingof2016tensor}) and can be thought of as a ``home'' to do commutative algebra, algebraic geometry, and Lie theory. Deligne's theorem states (cf. \cite{deligne2002categories}) that over algebraically closed fields of characteristic zero, all symmetric tensor categories of \textit{moderate growth}, meaning lengths of tensor powers grow subexponentially, are representation categories of affine supergroup schemes, roughly speaking. In other words, such categories \textit{fiber} over the symmetric tensor category $\sVec_\KK$ of supervector spaces. In positive characteristic, Deligne's theorem is not true; a simple but fundamental counterexample for $p > 3$ is the \textit{Verlinde category} $\Ver_p$ (when $p = 2,3$ it is just the category of vector spaces and supervector spaces, respectively). 
\par
The symmetric tensor category $\Ver_p$ is defined as the \textit{semisimplification} of the representation category $\Rep ((\mathbb{G}_a)_{(1)})$ of the first Frobenius kernel of the additive group scheme $\mathbb{G}_a$ (see \cite{ostrik2020symmetric,etingof2021semisimplification}). Informally speaking, this can be thought of as the category tensor generated by declaring indecomposable objects to be simple and sending indecomposable objects with categorical dimension zero to the zero object. In summary $\Ver_p$ is additively generated by $p-1$ simple objects $L_1, L_2, \dots, L_{p-1}$, subject to a Clebsch-Gordan-esque fusion rule. Notably, $L_1$ and $L_{p-1}$ generate $\sVec_\KK$ (for $p > 2$). More generally for any reductive group $G$ with Coxeter number $h < p$ we can define a category $\Ver_p(G)$, which is the semisimplification of the category of tilting modules over $G$ (when $G = SL(2)$ we recover $\Ver_p$). These contain certain symmetric tensor subcategories called $\Ver_p^+(G)$.
\par
The category $\Ver_p$ plays an important role in the quest for generalizing Deligne's theorem to positive characteristic. In \cite{coulembier2022frobenius}, it is shown that a symmetric tensor category fibers over $\Ver_p$ if and only if it is \textit{Frobenius-exact}, a class of symmetric tensor categories that includes all semisimple symmetric tensor categories. Therefore, it is worthwhile to study the representation theory of affine group schemes in $\Ver_p$, the most fundamental being that of the group scheme $GL(X)$ for any object $X$ in $\Ver_p$. Moreover, our answers must generalize what we know about $GL(m|n)$, as $\sVec_\KK \subseteq \Ver_p$ for $p > 2$.
\par
In \cite{venkatesh_glx_2022}, it is shown that finite-length (as objects in $\Ver_p$) simple modules over $G = GL(X)$ for an object $X = \bigoplus_{k=1}^{p-1}L_k^{\oplus n_k}$ in $\Ver_p$ are indexed by pairs $(\lambda, V)$, where $\lambda$ is a dominant integral weight for $GL(X)_0$ and $V \in {\bigboxtimes}_{k=1}^{p-1}\Ver_p^+(SL(k))^{\boxtimes n_k}$ is simple. We will denote the corresponding simple $GL(X)$-module as $L(\lambda, V)$.
\par
Similar to the ordinary vector space setting, one can define the Frobenius morphism on $GL(X)$ (see \cite{coulembier2022frobenius}). In particular, the image of $GL(X)$ under the Frobenius morphism lies in $G_0$, which lets us pullback representations from $G_0$ to $G$. We are now ready to state the main theorem of this paper:

\begin{restatable}{theorem}{maintheorem}
\label{thm:main_theorem}
    Let $X = \bigoplus_{k=1}^{p-1}L_k^{\oplus n_k}$ be an object in $\Ver_p$ and $G = GL(X)$ with $G_0 = \prod_{k=1}^n GL(n_k)$. Moreover, let $\Lambda(T)_+$ denote the set of dominant integral weights for $G_0$ and $\Lambda_1(T) \subset \Lambda(T)_+$ the weights arising from the first Frobenius kernel of $G_0$. Then, for any $\lambda \in \Lambda_1(T)$, $\mu \in \Lambda(T)_+$ and $V$ a simple object in ${\bigboxtimes}_{k=1}^{p-1}\Ver_p^+(SL(k))^{\boxtimes n_k}$, we have an isomorphism of $G$-modules:

    \[L(\lambda + p\mu, V) \cong L(\lambda, V)\otimes \Fr\nolimits_{+,\uparrow}(L_0(\mu)),\]
    where $L_0(\mu)$ is the simple module over $G_0$ of highest weight $\mu$, and $\Fr_{+,\uparrow}$ is the inflation induced by the Frobenius morphism $\Fr_+: G \rightarrow G_0$.
\end{restatable}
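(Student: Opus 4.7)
The plan is to adapt the argument from the classical Steinberg theorem and its supergroup extensions \cite{kujawa2006steinberg,BRUNDAN_Qn,binwang_orthosymplectic,shibata_frobenius_2023}: first identify $L(\lambda+p\mu,V)$ as a composition factor of $M := L(\lambda,V)\otimes \Fr_{+,\uparrow}(L_0(\mu))$ by highest-weight analysis, then prove that $M$ is already simple by a Frobenius kernel argument. Throughout let $G_+ := \ker(\Fr_+\colon G \to G_0)$ denote the first Frobenius kernel of $G$ in the sense of \cite{coulembier2022frobenius}.

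First I would verify that $\Fr_{+,\uparrow}(L_0(\mu))$ is a simple $G$-module whose underlying object in $\Ver_p$ lies in the classical subcategory $\Vecc \subset \Ver_p$ (so its Verlinde-type component is trivial) and which, viewed as a $G_0$-module via $\Fr_+$, has highest weight $p\mu$. Combined with the highest-weight classification of $G$-simples from \cite{venkatesh_glx_2022}, this shows that $M$ possesses a distinguished highest-weight line of type $(\lambda+p\mu, V)$, and hence that $L(\lambda+p\mu,V)$ occurs as a composition factor of $M$. The remaining task is to rule out every other composition factor.

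For this I would restrict to $G_+$. Because $\Fr_{+,\uparrow}(L_0(\mu))$ is inflated from $G_0$, the subgroup $G_+$ acts trivially on it, so in $\Ver_p$,
\[
M|_{G_+} \;\cong\; L(\lambda,V)|_{G_+} \otimes L_0(\mu),
\]
with $G_+$ acting only on the first tensor factor. Granting that $L(\lambda,V)|_{G_+}$ is simple whenever $\lambda \in \Lambda_1(T)$, any nonzero $G$-subobject $N \subseteq M$ is $G_+$-stable and hence of the form $L(\lambda,V)\otimes N'$ for some subobject $N' \subseteq L_0(\mu)$ in $\Ver_p$. The residual $G/G_+$-action identifies $N'$ with a $G_0$-subobject of $L_0(\mu)$, so simplicity of $L_0(\mu)$ as a $G_0$-module forces $N'=0$ or $N'=L_0(\mu)$, and hence $N=0$ or $N=M$. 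Together with the highest-weight analysis, this yields $M \cong L(\lambda+p\mu,V)$.

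I expect the main obstacle to be establishing the simplicity of $L(\lambda,V)|_{G_+}$ for $\lambda \in \Lambda_1(T)$ — the Verlinde analog of the classical statement that $L(\lambda)|_{G_1}$ is simple for restricted dominant $\lambda$. Proving it should require developing enough highest-weight theory for $G_+$-modules inside $\Ver_p$ to match the $(\lambda,V)$ labeling of $G$-simples, likely by realizing $L(\lambda,V)$ as the socle of a module induced from a Borel subgroup of $G$ and then analyzing how this Borel meets $G_+$. A secondary technical point is to verify that the subobject factorization $N = L(\lambda,V)\otimes N'$ used above is valid inside $\Ver_p$ rather than only for the underlying vector spaces, since the first tensor factor carries nontrivial Verlinde-type data and one must track how the $G/G_+$-structure descends to $N'$ as a morphism in $\Ver_p$.
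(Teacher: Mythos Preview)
Your overall architecture is correct and close to the paper's: both reduce everything to the single key lemma that $L(\lambda,V)$ remains simple upon restriction to the first Frobenius kernel $G_{(1)}$ when $\lambda\in\Lambda_1(T)$, and you correctly flag this as the main obstacle. The assembly step is organized slightly differently. The paper starts from the other side: it sets $H\coloneqq\Hom_{G_{(1)}}(L(\lambda,V),L(\lambda+p\mu,V))$, shows $H\neq 0$, proves that the evaluation map $H\otimes L(\lambda,V)\to L(\lambda+p\mu,V)$ is an isomorphism via a dimension count on $\Ver_p$-isotypic components (using complete reducibility of $L(\lambda+p\mu,V)$ over $G_{(1)}$), and then identifies $H$ with $\Fr_{+,\uparrow}(L_0(\mu))$ by highest weight. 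Your route---show directly that $L(\lambda,V)\otimes\Fr_{+,\uparrow}(L_0(\mu))$ is simple by the factorization $N=L(\lambda,V)\otimes N'$ of any $G$-submodule---is a legitimate and arguably cleaner variant, since $M|_{G_{(1)}}$ is a direct sum of copies of the simple $L(\lambda,V)|_{G_{(1)}}$ and Schur's lemma in $\Rep G_{(1)}$ gives the factorization without needing the separate complete-reducibility lemma.

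Where your proposal diverges from the paper is in the sketch for the key lemma itself. The paper does \emph{not} prove simplicity of $L(\lambda,V)|_{G_{(1)}}$ via any socle-of-induced-module argument. Instead it sets $M\coloneqq\Dist(G_{(1)})\cdot L(\lambda,V)_{(\lambda,V)}$, identifies this with $L_{(1)}(\lambda,V)$, and then shows $M$ is $\Dist(G)$-stable (hence equals $L(\lambda,V)$) by checking stability under the divided powers $e_{-\alpha}^{(r)}$ for simple ordinary roots $\alpha$. This check is an induction on a degree filtration of $\Dist(N^-_{(1)})\cong\Dist((N^-_0)_{(1)})\otimes S(\mathfrak n^-_{\neq 0})$, where one commutes $e_{-\alpha}^{(r)}$ past monomials in the $S(\mathfrak n^-_{\neq 0})$ factor; the base case is the classical $SL(2)$ computation. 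Your Borel-meets-$G_+$ idea is not obviously wrong, but it is vague and you would still need to confront exactly this commutation problem to compare highest-weight structures over $G$ and $G_{(1)}$, so you should expect the actual work to look like the divided-power argument above rather than a formal socle identification.
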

In particular, this generalizes the main theorem of Kujawa in \cite{kujawa2006steinberg} to the Verlinde category $\Ver_p$. The proof is essentially identical to that in \textit{loc. cit.} but has been adapted for the setting of $\Ver_p$.

\subsection{Outline of paper}
In \S\ref{sec:prelim}, we review some basic properties about the Verlinde categories of reductive groups. We then recall basic properties about affine group schemes in symmetric tensor categories as well as about their representation theory. We conclude the section by recalling how the affine group scheme $GL(X)$ is constructed and how its simple modules are constructed and classified.
\par
In \S\ref{sec:frob_reps}, we state the definition of the Frobenius twist and the Frobenius morphism on an affine group scheme in a symmetric tensor category that admits a symmetric tensor functor to $\Ver_p$. We then define Frobenius kernels and review their basic properties. We finish by constructing the simple modules for the Frobenius kernels of $GL(X)$. 
\par
Finally, in \S\ref{sec:proof}, we prove Theorem \ref{thm:main_theorem}.
\subsection{Acknowledgements}
I would like to thank my Ph.D. advisor Pavel Etingof for his guidance and support and the anonymous referee for proofreading and helpful comments.

\section{Preliminaries}\label{sec:prelim}
\subsection{Notation and conventions} For the rest of this paper, let us assume that $\KK$ is an algebraically closed field of characteristic $p > 3$ and all symmetric categories are defined over $\KK$ \footnote{We can also take $p = 2,3$ in most instances, but in this case the content of the main theorem is nothing new.}. We assume that the reader is familiar with symmetric tensor categories (see \cite{etingof2016tensor,etingof2021lectures}) and with reductive algebraic groups and their representation theory (see \cite{jantzen2003representations}). We will let $\un$ denote the unit object of a symmetric tensor category, $V^*$ denote the dual of an object $V$, $\boxtimes$ denote the Deligne tensor product of two symmetric tensor categories, and if $G$ is a group, let $\Rep G$ denote the representation category of $G$. We agree that all symmetric tensor functors out of a tensor category are exact. Let $\mathcal{C}^{ind}$ denote the ind-completion of a symmetric tensor category; in the case $\mathcal{C}$ is semisimple, this is just the category whose objects are potentially infinite direct sums of objects in $\mathcal{C}$. An object in a symmetric tensor category $\mathcal{C}$ fibered over the Verlinde category $\Ver_p$ will be called \textit{finite-dimensional} if its image under the fiber functor is the direct sum of finitely many simple objects in $\Ver_p$. Finally, given an object $A$ in a semisimple tensor category, let $A_0$ denote the subobject of $A$ corresponding to the isotypic component of the unit object, and let $A_{\neq 0}$ denote the subobject of $A$ corresponding to the sum of all other isotypic components.
\subsection{Verlinde categories}
To construct the Verlinde categories $\Ver_p(G)$ associated to a reductive algebraic group $G$ with Coxeter number $h < p$, we will first briefly review properties of a procedure known as \textit{semisimplification} and of tilting modules over $G$.
\par
If $\mathcal{C}$ is a symmetric tensor category (or, more generally, a symmetric \textit{pseudotensor} category), then its \textit{semisimplification} $\ov{\mathcal{C}}$ is, loosely speaking, the symmetric tensor category obtained by declaring indecomposable objects in $\mathcal{C}$ to formally be simple and sending indecomposable objects with categorical dimension zero to the zero object. There always exists a symmetric, monoidal, additive, and $\KK$-linear functor $\mathcal{C} \to \ov{\mathcal{C}}$ called the \textit{semisimplification functor}, as $\ov{\mathcal{C}}$ is the quotient of $\mathcal{C}$ by the tensor ideal of \textit{negligible morphisms}. We will denote the image of an object under the semisimplification functor with an overline over the object. See \cite{etingof2021semisimplification} for more details.
\par
A tilting module over $G$ is a module over $G$ which admits two filtrations, one whose successive subquotients are standard modules and the other whose successive subquotients are costandard modules. It is well known that for each $\lambda \in \Lambda(T)_+$, the set of dominant integral weights, there is an indecomposable tilting module $T(\lambda)$ with highest weight $\lambda$ (see \cite{jantzen2003representations}). These coincide with the simple $G$-module $L(\lambda)$ when $\lambda \in C^0$, the fundamental alcove. For $\lambda \in \Lambda(T)_+ \backslash C^0$, the dimension of $T(\lambda)$ is divisible by $p$ i.e. it is of categorical dimension zero. Moreover, any tilting module is the direct sum of some multiplicity of $T(\lambda)$ for various $\lambda \in \Lambda(T)_+$, which implies that there is a fusion rule for the tensor product $T(\lambda)\otimes T(\mu) = \bigoplus_{\nu \in \Lambda(T)_+} T(\nu)^{\oplus a_{\lambda\mu}^{\nu}}$. The upshot is that the full subcategory of tilting modules $\Tilt G$ of $\Rep G$ is a symmetric pseudotensor category, and we have the following definition:

\begin{Def}\cite{georgiev1994fusion, gelfand1992examples}
    Given a reductive algebraic group $G$ whose Coxeter number $h$ is less than $p$, we define the symmetric tensor category $\Ver_p(G)$ to be the semisimplification of the category $\Tilt G$ of tilting modules over $G$. The simple objects of $\Ver_p(G)$ are indexed by the dominant integral weights in the fundamental alcove $C^0$ of $G$. 
\end{Def}
In the case $G = SL(n)$, whose Coxeter number is $n$, we can define the category $\Ver_p(G)$ when $p > n$. The category $\Ver_p(SL(n))$ always admits a decomposition $\Ver_p(SL(n)) = \Ver_p^+(SL(n)) \boxtimes \mathcal{P}$, where $\Ver_p^+(SL(n))$ is a suitable symmetric tensor subcategory and $\mathcal{P}$ is the \textit{pointed} subcategory of $\Ver_p(G)$ generated by the \textit{invertible objects} i.e. simple objects $X$ such that $X \otimes X^* \cong \un$ (see \cite[Proposition 4.5]{venkatesh_glx_2022}). By studying the fusion rules of $\Ver_p(SL(n))$, we can deduce the following properties:

\begin{enumerate}
    \item Simple objects in $\Ver_p(SL(n))$  are indexed by partitions whose first part is at most $p-n$ and which have at most $n-1$ parts. These correspond precisely to weights in the fundamental alcove in the obvious way and we will therefore use the fundamental alcove to index these.
    \item  If $\lambda, \mu \in C^0$ are such that $n$ divides both $|\lambda|$ and $|\mu|$ (their sizes), then in $\Ver_p(G)$ we have $\ov{T(\lambda)} \otimes \ov{T(\mu)} = \bigoplus_{\nu \in C^0} \ov{T(\nu)}^{\oplus a_{\mu\lambda}^{\nu}}$, where $a_{\mu\lambda}^{\nu} = 0$ unless $n$ also divides $|\nu|$. Therefore, it can be shown that the simple objects in $\Ver_p^+(SL(n))$ are $\ov{T(\lambda)}$ for $\lambda \in C^0$ such that $n$ divides $|\lambda|$.
    \item The invertible objects in $\Ver_p(SL(n))$ (i.e. simple objects in $\mathcal{P}$) correspond to partitions of the form $((p-n)^k)$ for $0 \leq k \leq n-1$, with the unit object corresponding to the empty partition. Their multplication rule is given by $\ov{T((p-n)^k)} \otimes \ov{T((p-n)^j)} = \ov{T((p-n)^{(j+k) \mod n})}$. More generally, if $\lambda \in C^0$, then $\ov{T((p-n))} \otimes \ov{T(\lambda)} = \ov{T(\mu)}$ where $\mu$ is given as follows: visualize the two partitions $(p-n)$ and $\lambda$ as Young diagrams, stack $(p-n)$ on top of $\lambda$, and then delete the columns left to right until there are at most $n-1$ parts.
\end{enumerate}
The case $n = 2$ is particularly important for us:

\begin{Def}
    We denote $\Ver_p \coloneqq \Ver_p(SL(2)) = \Ver_p(SL(p-2))$, and we simply call it the \textit{Verlinde category}.
\end{Def}
The symmetric tensor category $\Ver_p$ has $p-1$ simple objects $L_1, L_2, \dots, L_{p-1}$, which are self-dual, and their fusion rule is given by the truncated Clebsch-Gordan rule:

\begin{equation*}\label{clebschgordan}
    L_m \otimes L_n = \bigoplus_{i=1}^{\min(m,n,p-m,p-n)} L_{|m-n| + 2i - 1}.
\end{equation*}
The invertible objects are $L_1$ and $L_{p-1}$, and in this case $\mathcal{P} = \sVec_\KK$ (see \cite{ostrik2020symmetric}). We let $\Ver_p^+ \coloneqq \Ver_p^+(SL_2)$, which is spanned by the objects $L_i$ for $i$ odd, and we have $\Ver_p = \Ver_p^+ \boxtimes \sVec_\KK$. The Verlinde category is proving to be a central object in the study of symmetric tensor categories in positive characteristic (see \cite{ostrik2020symmetric,coulembier2020tannakian, benson2023new, coulembier2022frobenius,coulembier2023incompressible}). For instance, there is always a forgetful functor $\Ver_p(G) \rightarrow \Ver_p(SL(2))$ given by inclusion of a principal $SL(2)$ in $G$, which at the level of representation categories is given by the restriction functor, which sends negligible morphisms to negligible morphisms (see \cite{ostrik2020symmetric} for details).
\par
The Verlinde category admits another description which will be useful for us. Let $\bal_p \coloneqq (\mathbb{G}_a)_{(1)}$ denote the first Frobenius kernel of the additive group scheme $\mathbb{G}_a$. Because $\mathcal{C} \coloneqq \Rep \bal_p$ can be identified with $\Rep \KK[t]/(t^p)$, by the theory of Jordan canonical forms, the category $\mathcal{C}$ has $p$ indecomposable objects, $J_k$ for $1 \leq k \leq p$, with $\dim J_k = k$. The semisimplification $\ov{\mathcal{C}}$ of $\mathcal{C}$ is $\Ver_p$, with $\ov{J_k} = L_k$ for $1 \leq k \leq p-1$.
\subsection{Affine group schemes in $\Ver_p$ and associated constructions}
For this subsection, let $\mathcal{C}$ denote $\Ver_p$, although most definitions below extend to any semisimple symmetric tensor category. Recall that notions of an algebra, commutative algebra, Lie algebra, or Hopf algebra in $\mathcal{C}$ and morphisms between them can be defined by diagrammatizing the usual definitions (here we assume in the definition that a Lie algebra is an operadic Lie algebra that satisfies the PBW theorem, see \cite{etingof2018koszul} for more details). 

\begin{Def}
An \textit{affine group scheme} $G$ in $\mathcal{C}$ is a representable functor $\mathbf{CommAlg}_{\mathcal{C}} \rightarrow \mathbf{Grp}$ from the category of commutative algebras in $\mathcal{C}$ to the category of groups. The usual argument shows that such a functor is represented by a commutative Hopf algebra $\mathcal{O}(G)$ in $\mathcal{C}^{ind}$, which we call the \textit{coordinate ring of $G$} (i.e. $G(A) = \Hom_{\mathbf{CommAlg}_{\mathcal{C}}}(\mathcal{O}(G), A)$ for any commutative algebra $A \in \mathcal{C}$). If the coordinate ring $\mathcal{O}(G)$ is finitely generated, meaning it is a quotient of the symmetric algebra $S(V)$ for some object $V \in \mathcal{C}$, then we say that $G$ is of \textit{finite type}. If the coordinate ring is finite-dimensional, we say that $G$ is \textit{finite}.
\end{Def}
In the category of vector spaces, various notions in commutative and algebraic geometry relevant to the study of group schemes have been developed over the past several decades; see \cite[\S I]{jantzen2003representations} for a reasonable collection of such definitions and results we might want or expect. However, formulating such notions is in its infancy for symmetric tensor categories in general (see \cite{zubkov2009, MASUOKA2011135, MASUOKA202289} and references therein for $\sVec_\KK$, and \cite{coulembier2023commutative_alg, coulembier2023algebraic_geo} for symmetric tensor categories in general). For instance, it is unclear what the correct definition of a normal subgroup should be. To get around these ambiguities, we will appeal to the theory of \textit{Harish-Chandra pairs}, which was first formulated for supergroups in arbitrary characteristic in \cite{masuoka2012harish} and extended to affine group schemes in the Verlinde category in \cite{venkatesh_hcpairs}. To do so, we need to introduce some notions and in the remainder of the subsection will freely refer to results in \cite{venkatesh_hcpairs}. 

\begin{Def}
The \textit{underlying ordinary subgroup} $G_0$ of an affine group scheme $G$ is the affine group scheme represented by the quotient Hopf algebra $\mathcal{O}(G)/\langle \mathcal{O}(G)_{\neq 0}\rangle$. 
\end{Def}
The group scheme $G_0$ can also be thought of as the restriction of $G$ to the category of ordinary commutative algebras. It is obviously a closed subgroup. Moreover, let $I$ denote the \textit{augmentation ideal} of $\mathcal{O}(G)$, which by definition is the kernel of the counit map on $\mathcal{O}(G)$. We can define the \textit{distribution algebra} of $G$ and its \textit{Lie algebra} $\Lie(G)$ as usual:

\begin{Def}
The \textit{distribution algebra} (or hyperalgebra) $\Dist(G)$ of $G$ is given by 

\[\Dist(G) \coloneqq \bigcup_{n=0}^\infty (\mathcal{O}(G)/I^n)^*,\]
which admits the structure of an algebra by dualizing the comultiplication on $\mathcal{O}(G)$. The subobject of primitives in  $\Dist(G)$, which can be identified with $(I/I^2)^*$, is called the \textit{Lie algebra} $\Lie(G)$ of $G$.
\end{Def}
Because it is a subobject of the associative algebra $\Dist(G)$ closed under bracket given by the categorical formulation of the commutator, $\Lie(G)$ is automatically a Lie algebra, an operadic Lie algebra satisfying the PBW theorem (see \cite{etingof2018koszul} for more details). It can also be checked that $\Dist(G_0)$ is a subalgebra of $\Dist(G)$ and that $\Lie(G_0)$ is $\Lie(G)_0$. We list some properties about distribution algebras and coordinate rings below (in $\Ver_p$):

\begin{enumerate}
    \item let $X \in \Ver_p$ be equipped with trivial bracket and suppose $\Hom(\un, X) = 0$. Then $X$ is the Lie algebra of a group scheme represented by $S(X)^*$, which is necessarily finite-dimensional, as $S(L_i) = 0$ for all $i > 1$ (see \cite[Proposition 2.4]{etingof2017computations});
    \item we have an isomorphism $\mathcal{O}(G) \cong \mathcal \mathcal{O}(G_0) \otimes S(\Lie(G)_{\neq 0}^*)$ as left $\mathcal{O}(G_0)$-comodule algebras (see \cite[Lemma 7.13]{venkatesh_hcpairs});
    \item  dually, as left $\Dist(G_0)$-module coalgebras, we have an isomorphism $\Dist(G) \cong \mathcal \Dist(G_0) \otimes S(\Lie(G)_{\neq 0})$ (see \cite[Theorem 6.3]{venkatesh_hcpairs}).
\end{enumerate}

Finally, we require one more standard but important definition: 
\begin{Def}\label{hcpairs_def}
A \textit{Harish-Chandra pair} is a pair $(H_0, \mathfrak{h})$, where $H_0$ is a ordinary affine group scheme of finite type over $\KK$ and $\mathfrak{h}$ is a Lie algebra in $\mathcal{C}$ such that:
\begin{enumerate}
    \item $\mathfrak{h}_0 = \Lie(H_0)$ is a Lie subalgebra of $\mathfrak{h}$;
    \item $H_0$ acts on $\mathfrak{h}$ by Lie algebra automorphisms and this action restricts to the adjoint action on $\mathfrak{h}_0$;
    \item the differential of the action of $H_0$ on $\mathfrak{h}$ is the adjoint action of $\mathfrak{h}_0$ on $\mathfrak{h}$.
\end{enumerate}

\end{Def}
Collectively, these form a category $\mathbf{HC}$, whose morphisms consist of a group homomorphism and a Lie algebra homomorphism so that the compatibility conditions in Definition \ref{hcpairs_def} are preserved in the obvious way. We can now state the main theorem of \cite{venkatesh_hcpairs}:

\begin{thm}{\cite[Theorem 1.2, Corollary 1.3]{venkatesh_hcpairs}}\label{hcpairs_theorem}
    There is an equivalence of categories between the category of affine group schemes of finite type in $\Ver_p$ and the category $\mathbf{HC}$, where an affine group scheme $G$ of finite type is sent to the pair $(G_0, \Lie(G))$. In particular, the closed subgroups of $G$ are in bijection with Harish-Chandra pairs $(H_0, \mathfrak{h})$, where $H_0$ is a closed subgroup of $G_0$ and $\mathfrak{h}$ is a $H_0$-stable Lie subalgebra of $\Lie(G)$ such that $\Lie(H_0) = \mathfrak{h}_0$.
\end{thm}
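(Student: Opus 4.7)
The plan is to exhibit mutually quasi-inverse functors $\Phi$ and $\Psi$ between the category of affine group schemes of finite type in $\Ver_p$ and the category $\mathbf{HC}$, using the structural decompositions (2) and (3) listed in the excerpt, and to deduce the closed-subgroup statement by translating ``Hopf ideal in $\mathcal{O}(G)$'' across the resulting equivalence. The forward functor is $\Phi(G) := (G_0, \Lie(G))$, which is manifestly functorial in $G$, and the three Harish-Chandra axioms are routine: axiom (1) is the equality $\Lie(G_0) = \Lie(G)_0$ already recorded in the excerpt; axiom (2) is the adjoint action of $G_0$ on $\Lie(G)$ obtained by differentiating at the identity, in the second factor, the conjugation map $G_0 \times G \to G$; axiom (3) is the standard fact that the differential of the adjoint action of an affine group scheme on its Lie algebra is the bracket.

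For $\Psi$, given a HC-pair $(H_0, \mathfrak{h})$, decompose $\mathfrak{h} = \mathfrak{h}_0 \oplus \mathfrak{h}_{\neq 0}$ with $\mathfrak{h}_0 = \Lie(H_0)$, and form the smash-product bialgebra
\[
\mathbf{D}(H_0, \mathfrak{h}) := \Dist(H_0) \otimes U(\mathfrak{h}_{\neq 0}),
\]
with tensor-product comultiplication (so that $\mathfrak{h}_{\neq 0}$ is primitive), and with multiplication combining the $H_0$-action on $U(\mathfrak{h}_{\neq 0})$ (extending axiom (2) by Lie-algebra automorphisms) with the $\mathfrak{h}_0$-bracket action on $\mathfrak{h}_{\neq 0}$; axiom (3) of the HC-pair is exactly the compatibility needed for these two multiplications to fit into a single associative one. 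Because every simple object $L_i \in \Ver_p$ with $i > 1$ has $S(L_i)$ finite-dimensional, the operadic PBW theorem in $\Ver_p$ (see \cite{etingof2018koszul}) gives that $U(\mathfrak{h}_{\neq 0})$ is finite-dimensional, so the finite dual of $\mathbf{D}(H_0, \mathfrak{h})$ is a commutative Hopf algebra in $\Ver_p^{ind}$, representing an affine group scheme of finite type that we define as $\Psi(H_0, \mathfrak{h})$. Morphisms of HC-pairs assemble tautologically into bialgebra maps of the corresponding $\mathbf{D}$'s.

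For the equivalence, the composition $\Phi \circ \Psi$ is immediate by construction: the primitives of $\mathbf{D}(H_0, \mathfrak{h})$ are exactly $\mathfrak{h}_0 \oplus \mathfrak{h}_{\neq 0} = \mathfrak{h}$, and the group-like content of $\Dist(H_0) \subset \mathbf{D}(H_0,\mathfrak{h})$ recovers $H_0$. The opposite composition $\Psi \circ \Phi \simeq \mathrm{id}$ is the substantive direction, and it reduces to exhibiting an isomorphism $\Dist(G) \cong \mathbf{D}(G_0, \Lie(G))$ of bialgebras in $\Ver_p^{ind}$; this is precisely the structural decomposition (3) stated in the excerpt, and dualizing yields the isomorphism of coordinate rings $\mathcal{O}(G) \cong \mathcal{O}(\Psi(G_0, \Lie(G)))$ that is the content of (2). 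The closed-subgroup corollary then follows by translating the dictionary ``closed subgroup $\leftrightarrow$ Hopf quotient of $\mathcal{O}$'' across the equivalence: a Hopf quotient of $\mathcal{O}(G_0) \otimes S(\Lie(G)_{\neq 0}^*)$ respecting the smash-product structure corresponds exactly to a closed subgroup $H_0 \le G_0$ together with an $H_0$-stable Lie subalgebra $\mathfrak{h} \le \Lie(G)$ satisfying $\mathfrak{h}_0 = \Lie(H_0)$.

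The main obstacle is establishing the structural decomposition (3) itself. The familiar characteristic-zero super argument goes via exponentiation of the odd part of the Lie algebra, which is unavailable in characteristic $p$. The replacement is to filter $\Dist(G) = \bigcup_n (\mathcal{O}(G)/I^n)^*$ by the powers of the augmentation ideal, identify the associated graded with $S(\Lie(G))$ by the operadic PBW theorem in $\Ver_p$, and then produce a $G_0$-equivariant splitting that lifts the decomposition $S(\Lie(G)) \cong S(\Lie(G)_0) \otimes S(\Lie(G)_{\neq 0})$ to the filtered object $\Dist(G) \cong \Dist(G_0) \otimes S(\Lie(G)_{\neq 0})$. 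This last splitting is the delicate point; it reduces to a vanishing statement for a relative Hochschild cohomology group built from the $\Dist(G_0)$-action on $S(\Lie(G)_{\neq 0})$, which uses the semisimplicity of $\Ver_p$ and the finite-dimensionality of $S(L_i)$ for $i > 1$ as its $\Ver_p$-specific inputs. Once this splitting is in hand, the rest of the equivalence is formal manipulation of Hopf algebras in $\Ver_p^{ind}$.
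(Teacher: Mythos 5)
This statement is not proved in the paper at all: it is imported verbatim from \cite[Theorem 1.2, Corollary 1.3]{venkatesh_hcpairs}, so there is no in-paper argument to compare against. Judged on its own terms, your proposal has the right general shape (the forward functor $(G_0,\Lie(G))$, a quasi-inverse built from a crossed-product decomposition, and the identification of where the real work lies), but the construction of the quasi-inverse $\Psi$ has a fatal defect. You build $\Psi(H_0,\mathfrak{h})$ out of $\Dist(H_0)$ and then pass to a finite dual; but the distribution algebra does not determine an affine group scheme of finite type in characteristic $p$. For example, for the pair $(H_0,\mathfrak{h})=(\mathbb{Z}/\ell,0)$ with $\ell$ prime to $p$ one has $\Dist(H_0)=\KK$, so your $\mathbf{D}(H_0,\mathfrak{h})$ and its finite dual are trivial and $\Psi$ returns the trivial group; likewise ``the group-like content of $\Dist(H_0)$'' is just the identity for any connected $H_0$ and recovers nothing. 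The correct construction (as in Masuoka's work on Harish--Chandra pairs and its extension in \cite{venkatesh_hcpairs}) must take the full coordinate ring $\mathcal{O}(H_0)$ as input and produce $\mathcal{O}(H_0)\otimes S(\mathfrak{h}_{\neq 0}^*)$ with a crossed-product (smash coproduct) Hopf structure, dualizing only the ``new'' directions $\mathfrak{h}_{\neq 0}$.

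Two further points. First, $U(\mathfrak{h}_{\neq 0})$ is not defined: $\mathfrak{h}_{\neq 0}$ is not closed under the bracket (generically $[\mathfrak{h}_{\neq 0},\mathfrak{h}_{\neq 0}]\subseteq\mathfrak{h}_0$, exactly as odd times odd is even in the super case), so the complement must be the coalgebra $S(\mathfrak{h}_{\neq 0})$, and the multiplication on $\Dist(H_0)\otimes S(\mathfrak{h}_{\neq 0})$ is a crossed product twisted by the $2$-cocycle coming from that bracket, not a plain smash product; axiom (3) of Definition \ref{hcpairs_def} alone does not give associativity. Second, you correctly identify the decomposition $\Dist(G)\cong\Dist(G_0)\otimes S(\Lie(G)_{\neq 0})$ as the substantive step, but you then discharge it to an unstated ``vanishing of a relative Hochschild cohomology group.'' That splitting is precisely the hard core of the cited theorem (it is how properties (2) and (3) in \S\ref{sec:prelim} are established in the first place), so leaving it as an unproved vanishing claim means the essential content of the theorem has not actually been proved. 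To repair the argument you would need to either prove that splitting directly (e.g.\ by the integral/cointegral techniques of Masuoka adapted to $\Ver_p$) or restructure $\Psi$ around $\mathcal{O}(H_0)$ so that the equivalence can be checked without inverting $\Dist$.
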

In particular, we will call such a subgroup as in Theorem \ref{hcpairs_theorem} \textit{normal} if $H_0$ is a normal subgroup of $G_0$ and $\mathfrak{h}$ is an $H_0$-stable ideal in $\Lie(G)$.

\subsection{Representations of affine group schemes}
The usual definition of a representation of an affine group scheme $G$ using either comodules, a homomorphism into a suitable general linear group (defined in \ref{sec:glx}), or the functor of points definition all extend to symmetric tensor categories as the definitions can be phrased categorically (see \cite[\S I.2]{jantzen2003representations}). We will denote the corresponding category of representations as $\widetilde{\Rep G}$. However, we want to consider a certain natural class of representations, which we will define below.
\par
Given a symmetric tensor category $\mathcal{C}$, we can associate an affine group scheme $\pi(\mathcal{C})$ in $\mathcal{C}$ called its \textit{fundamental group} (the notion is originally due to Deligne, see \cite[\S 2.4]{etingof_complex_rank_2} for more details). In the case $\mathcal{C}$ is semisimple, $\pi(\mathcal{C})$ is represented by $\mathcal{O}(\pi(\mathcal{C})) \coloneqq \bigoplus_{X \in \Irr \mathcal{C}} X \otimes X^*$, where $\Irr \ \mathcal{C}$ denotes the set of isomorphism classes of simple objects in $\mathcal{C}$. Every object in $\mathcal{C}$ has a coaction by $\mathcal{O}(\pi(\mathcal{C}))$, so the fundamental group acts on every object in the category. If $G$ is an affine group scheme in $\mathcal{C}$, then there is a natural forgetful functor $\widetilde{\Rep G} \rightarrow \mathcal{C}$. This in turn yields a group homomorphism $\rho: \pi(\mathcal{C}) \rightarrow G$. We can now make the following definition:
\begin{Def}
Let $G$ be an affine group scheme in $\mathcal{C}$. We let $\Rep G \subseteq \widetilde{\Rep G}$ be the full subcategory of $\widetilde{\Rep G}$ consisting of $G$-modules $M$ such that the action of the fundamental group $\pi(\mathcal{C})$ on $M$ as an object in $\mathcal{C}$ coincides with its action via $\rho: \pi(\mathcal{C}) \rightarrow G$.
\end{Def}
\subsection{The group scheme $GL(X)$ and its representation theory}\label{sec:glx}
We are now ready to describe both the group scheme $GL(X)$ for an object $X \in \Ver_p$ and its representation theory. We freely reference \cite{venkatesh_glx_2022}, where basic properties about $GL(X)$ were first proven and the simple $GL(X)$-modules were first classified. 
\par
For an object $X \in \Ver_p^+$, let $\gl(X) \coloneqq X \otimes X^*$ denote the matrix algebra on $X$ (we will also use this notation to refer to the Lie algebra $\gl(X)$ later). Its multiplication arises from the evaluation map of $X^*$ on $X$ as follows:

\[X \otimes X^* \otimes X \otimes X^* \xrightarrow{1_X \otimes ev_{X^*,X}\otimes 1_{X^*}} X \otimes \un \otimes X^* \cong X \otimes X^*,\]
analogous to usual matrix multiplication. This yields a map

\[m^*: S( \gl(X)^* ) \rightarrow S( \gl(X)^* ) \otimes S( \gl(X)^* )\] 
of commutative algebras. Moreover, the universal property of symmetric algebras applied to the coevaluation map $\un \rightarrow \gl(X)$ yields a map $S(\gl(X)^*) \rightarrow \un$, whose kernel we will call $K$. Then, we have the following definition:

\begin{Def}
    The \textit{general linear group scheme} $GL(X)$ is by definition the affine group scheme represented by the quotient Hopf algebra $S( \gl(X)^* ) \otimes S( \gl(X)^* )  / \langle m^*(K) \rangle$.
\end{Def}
The definition comes from mimicking the usual definition of the general linear group as the scheme defined by the equation $AB = 1$. From this definition the following properties can be shown:

\begin{enumerate}
    \item  An explicit description as a functor of points is that $GL(X)(A) = \Aut_{A}(A\otimes X)$ is the group of $A$-module automorphisms of the $A$-module $A \otimes X$ for $A$ a commutative algebra in $\Ver_p$ (see \cite[Proposition 3.1]{venkatesh_glx_2022}).
    \item If $X = \bigoplus_{k=1}^{p-1}L_k^{\oplus n_k}$, then the underlying ordinary group of $GL(X)$ is given by
    
    \[GL(X)_0 = \prod_{k=1}^{p-1} GL(n_k).\]
    See \cite[Corollary 3.3]{venkatesh_glx_2022}.
    \item $\Lie(G) = \mathfrak{gl}(X)$, where the bracket is given by the categorical formulation of the commutator arising from the matrix multiplication above (see \cite[Corollary 3.3]{venkatesh_glx_2022}). 
    \item Define the special linear Lie algebra $\mathfrak{sl}(X)$ to be the kernel of the map 
    
    \[X\otimes X^* \xrightarrow{c_{X,X^*}} X^* \otimes X \xrightarrow{ev_{X^*,X}} \un,\] 
    where the first arrow is application of the braiding and the second is the evaluation of an object on its dual. Then, if $\dim(X) \neq 0$, we have an isomorphism 
    
    \[\mathfrak{gl}(X) = \mathfrak{sl}(X) \oplus \un.\] 
    Moreover, $\mathfrak{sl}(L_i)$ is a simple Lie algebra with $\Hom(\un ,\mathfrak{sl}(L_i)) = 0$ for all $1 < i < p-1$ (it is zero otherwise) (see \cite[Proposition 3.5, Theorem 4.3]{venkatesh_glx_2022}).
\end{enumerate}
It will be useful to denote various subgroups of $GL(X)$, which we will phrase using the correspondence between subgroups and Harish-Chandra pairs. Let us fix an ordering on the simple objects appearing in $X$, so that simple objects that are isomorphic appear consecutively. For example, if $X = L_1^{\oplus 2} \oplus L_3^{\oplus 3}$, write $X = L_1 \oplus L_1 \oplus L_3 \oplus L_3 \oplus L_3$. With respect to this ordering we write $X = \bigoplus_{k=1}^n X_k$, where $n = n_1 + \cdots + n_{p-1}$ and each $X_k$ is simple. Then, we can think of $\gl(X)$ as a matrix with the object $X_i \otimes X_j^*$ in the $(i,j)$-th entry. We will define the following subgroups of $GL(X)$:

\begin{enumerate}
    \item The standard maximal torus 
    
    \[T(X) \coloneqq \left(T(GL(X)_0), \mathfrak{t}(X)\right),\] where $T(GL(X)_0)$ is the maximal torus of diagonal matrices in the underlying ordinary subgroup $GL(X)_0 = \prod_{k=1}^{p-1} GL(n_k)$ and $\mathfrak{t}(X) = \bigoplus_{1 \leq i \leq n} \gl(X_i)$ is the Lie subalgebra of diagonal matrices in $\mathfrak{gl}(X)$. The representation theory of $T(X)$ is semisimple, and therefore any $GL(X)$-module admits a decomposition as the direct sum of $T(X)$-modules, which we call its \textit{weight-space decomposition} (with respect to $T(X)$). If $L$ is a $GL(X)$-module, then the isotypic component of $V$ in $L$ for an irreducible $T(X)$-module $V$ will be denoted by $L_V$. 
    \item The standard Borel 
    
    \[B(X) \coloneqq \left(\prod_{k=1}^{p-1} B(n_k), \mathfrak{b}(X)\right),\] 
    where $B(n_k)$ is the subgroup of upper-triangular matrices in $GL(n_k)$ and $\mathfrak{b}(X) = \bigoplus_{1 \leq i \leq j \leq n} X_i \otimes X_j^*$ is the Lie subalgebra of upper-triangular matrices in $\mathfrak{gl}(X)$.
    \item The strictly lower-triangular matrices $N^-(X) \coloneqq \left(\prod_{k=1}^{p-1} N^-(n_k), \mathfrak{n}^-(X)\right)$, where $N^-(n_k)$ is the subgroup of strictly lower-triangular matrices in $GL(n_k)$ and 
    
    \[\mathfrak{n}^-(X) = \bigoplus_{1 \leq j < i \leq n} X_i \otimes X_j^*\] 
    is the Lie subalgebra of strictly lower triangular matrices in $\mathfrak{gl}(X)$.
    \item The parabolic subgroup $P(X)$ is defined by
    
    \[P(X) \coloneqq \left(GL(X)_0, \mathfrak{b}(X) + \sum_{i=1}^{p-1}\gl(L_i^{\oplus n_i}) \right).\]
\end{enumerate}
We briefly recall how the simple $GL(X)$-modules are constructed in \cite{venkatesh_glx_2022}, as it will be relevant for how we determine simple modules over Frobenius kernels of $GL(X)$ later in the paper. First, one can deduce that $\Rep GL(L_k) = \Rep \mathbb{G}_m \boxtimes \Ver_p^+(SL(k))$, where $\mathbb{G}_m$ is the multiplicative group. Then, via a standard Verma-module argument using distribution algebras of $GL(L_k^{\oplus n_k})$, one finds that the simple $GL(L_k^{\oplus n_k})$ modules are indexed by pairs consisting of a dominant integral weight for $GL(n_k)$ and a simple object in $\Ver_p^+(SL(k))^{\boxtimes n_k}$. Finally, one uses parabolic induction with respect to the subgroup $P(X)$ to deduce the following theorem:

\begin{thm}{\cite[Theorem 1.3]{venkatesh_glx_2022}}
    Let $X = \bigoplus_{k=1}^{p-1}L_k^{\oplus n_k}$ be an object in $\Ver_p$. Then, the finite-dimensional simple $GL(X)$-modules are indexed by pairs $(\lambda, V)$, where $\lambda$ is a dominant integral weight for $GL(X)_0$ and $V \in {\bigboxtimes}_{k=1}^{p-1}\Ver_p^+(SL(k))^{\boxtimes n_k}$ is simple. 
\end{thm}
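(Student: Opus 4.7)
The plan is to follow the three-tier reduction sketched in the paragraph preceding the theorem statement: first classify simples over $GL(L_k)$, then lift to $GL(n_kL_k)$ via a highest-weight argument using distribution algebras, and finally pass to general $X$ via parabolic induction from $P(X)$.

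For the base case $G = GL(L_k)$, the underlying ordinary group is $G_0 = GL(1) = \mathbb{G}_m$, and $\Lie(G) = \gl(L_k) = \mathfrak{sl}(L_k) \oplus \un$ by the listed properties of $\mathfrak{sl}$. The scalar $\mathbb{G}_m$-action on $L_k$ endows the representation category with a $\Z$-grading, while the simple Lie algebra $\mathfrak{sl}(L_k)$ in $\Ver_p$ is (via the principal $SL(2)$-embedding into $SL(k)$) the object controlling the tilting/semisimplification picture for $\Ver_p^+(SL(k))$. Packaging these through the Harish-Chandra pairs equivalence (Theorem~\ref{hcpairs_theorem}) yields $\Rep GL(L_k) \cong \Rep \mathbb{G}_m \boxtimes \Ver_p^+(SL(k))$, so the finite-dimensional simples are indexed by pairs $(m, U)$ with $m \in \Z$ and $U$ a simple object of $\Ver_p^+(SL(k))$.

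For $G = GL(n_kL_k)$, the distribution algebra factors (via PBW in $\Ver_p$) in the familiar triangular way with respect to the block-diagonal Levi $\ell \cong GL(L_k)^{n_k}$ and the strict block-triangular unipotent subgroups $N^\pm$. By Step~1, simple $\ell$-modules are indexed by pairs $(\lambda, V)$ with $\lambda \in \Z^{n_k}$ and $V$ a simple object of $\Ver_p^+(SL(k))^{\boxtimes n_k}$. Extending such a module trivially across $N^+$ and inducing up produces a Verma-type module $M(\lambda,V)$; because $T(X)$ has semisimple representation theory, $M(\lambda,V)$ admits a $T(X)$-isotypic decomposition (each component an object of $\Ver_p$), with highest component equal to the original $\ell$-module. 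The standard argument that every proper submodule is contained in the sum of the strictly lower weight components supplies a unique maximal proper submodule, and hence a simple head $L(\lambda,V)$ whenever $\lambda$ is $GL(n_k)$-dominant; conversely any finite-dimensional simple $G$-module is such a quotient by examining its highest weight object.

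Finally, for general $X$, the Levi of $P(X)$ is $\prod_k GL(n_kL_k)$, whose simple modules by Step~2 are parameterized by pairs $(\lambda, V)$ with $\lambda$ a dominant integral weight for $\prod_k GL(n_k) = GL(X)_0$ and $V$ a simple object of $\bigboxtimes_k \Ver_p^+(SL(k))^{\boxtimes n_k}$. Inflating each such along the quotient $P(X) \twoheadrightarrow \prod_k GL(n_kL_k)$ (with the unipotent radical acting trivially) and applying parabolic induction $H^0(GL(X)/P(X), -)$ produces, by the Jantzen-style argument ported into $\Ver_p$ through the Harish-Chandra pairs formalism and the exactness of symmetric tensor functors, a unique simple head $L(\lambda,V)$; exhaustion of all simples follows from a standard Borel-socle argument applied to the full Borel $B(X)$. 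The main obstacle is Step~2: ensuring that the Verma formalism — triangular decomposition, weight-space analysis, existence of the unique maximal submodule, dominance condition on $\lambda$ — goes through when weight objects live in $\Ver_p$ rather than in ordinary vector spaces, which requires careful use of the PBW theorem in $\Ver_p$ and of the interplay between $T(X)$-isotypic decompositions and the actions of $N^\pm$.
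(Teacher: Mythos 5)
Your proposal follows the same three-tier reduction that the paper itself sketches for this cited result: identify $\Rep GL(L_k) \cong \Rep \mathbb{G}_m \boxtimes \Ver_p^+(SL(k))$, classify simples over $GL(n_kL_k)$ by a Verma/highest-weight argument with distribution algebras, and then apply parabolic induction from $P(X)$. The only place your account is thinner than it should be is the first step, where the equivalence $\Rep SL(L_k) \simeq \Ver_p^+(SL(k))$ is the substantive input from \cite{venkatesh_glx_2022} rather than a formal consequence of the Harish-Chandra pairs correspondence, but since the paper likewise defers to that reference for this point, your route is essentially identical to the paper's.
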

We will denote the corresponding simple $GL(X)$-module as $L(\lambda, V)$. It is shown in \textit{loc. cit.} that these are all compatible with the action of the fundamental group of $\Ver_p$.

\subsection{Weights and Roots for $GL(X)$}
Finally, let us conclude this subsection by setting up notation to describe a first-pass at what weights and roots for $GL(X)$ might be. We will also review what the dominant integral weights of $GL(X)_0$ are. 
\par
Let $\Lambda(T) = \Hom(T(X)_0, \mathbb{G}_m)$ denote the character lattice, with generators 

\begin{equation}\label{functionals}
    \{\epsilon_1^{1}, \dots, \epsilon_{n_1}^{1}, \epsilon_{1}^2, \dots, \epsilon_{n_2}^{2}, \dots, \epsilon_{1}^{p-1}, \dots, \epsilon_{n_{p-1}}^{p-1}\},
\end{equation}
where $\epsilon_{j}^k$ picks out the $j$-th diagonal entry in $GL(n_k)$ viewed as a subgroup of $GL(X)_0$. For convenience, we will also refer these generators as the set

\[\{\epsilon_1, \dots, \epsilon_{n_1}, \epsilon_{n_1 + 1}, \dots, \epsilon_{n_1 + n_2}, \dots, \epsilon_{n_1 + n_2 + \cdots +n_k}\},\]
and we identify the two via the mapping $\bigoplus_{k=1}^{p-1} L_k^{\oplus n_k}$ to $\bigoplus_{i=1}^{n} X_i$ described above. This can be identified with $\mathbb{Z}^n$ with respect to this ordering. Then, the dominant integral weights of $GL(n_k)$ are given by 

\[\{\lambda = (\lambda_1, \dots, \lambda_{n_k}) \in \Z^{n_k} : \ \lambda_1 \geq \lambda_2 \geq \cdots \geq \lambda_{n_k}\},\]
and the $p$-restricted weights of $GL(n_k)$ are given by 

\[ \{\lambda = (\lambda_1, \dots, \lambda_{n_k}) \in \Z^{n_k} : \ 0 \leq \lambda_i - \lambda_{i+1} < p, \ 1 \leq i \leq n_k - 1\}.\]
The dominant integral weights $\Lambda(T)_+$ for $GL(X)_0$ in the character lattice can be given by the products of sets of dominant integral weights of $GL(n_k)$ for $1 \leq k \leq p-1$ included in $\Lambda(T)$ in the obvious way. One similarly defines the set $\Lambda_1(T)$ of $p$-\textit{restricted weights}. Notice now that the decomposition of the Lie algebra $\gl(X) = \bigoplus_{1 \leq i, j \leq n} X_i \otimes X_j^*$ is a multiplicity-free decomposition into a direct sum of $T(X)$-modules. The weight corresponding to $X_i \otimes X_j^*$ is the $T(X)$-module 

\[(\epsilon_i - \epsilon_j, \un \boxtimes \cdots \boxtimes X_i \boxtimes \cdots \boxtimes X_j^* \boxtimes \cdots \boxtimes \un), \]
where $\un$ denotes the trivial representation for the corresponding group and where $X_i$ and $X_j^*$ are in the $i$-th and $j$-th positions, respectively, and are viewed as the natural representation of $SL(X_i)$ and dual of the natural representation of $SL(X_j)$, respectively.
\par
As a $T_0$-module, it is clear that the weights appearing in this decomposition are 

\[\{\epsilon_i - \epsilon_j\}_{1 \leq i \neq j \leq n}.\]
We will call such weights \textit{roots}, and call a root $\epsilon_i - \epsilon_j$ an \textit{ordinary root} if $X_i \cong X_j$ and a \textit{mixed root} if $X_i \not\cong X_j$. Let $\Phi$ denote the set of all roots, $\Phi_0$ the set of ordinary roots, and $\Phi_{\neq 0}$ the set of mixed roots. We have a set of simple roots given by $\Delta = \{\epsilon_i - \epsilon_{i+1}\}_{1\leq i \leq n-1}$. With respect to this we have our usual notion of positive and negative roots, denoted by $\Phi^+$ and $\Phi^-$, respectively (and extend similar notation for the set of positive or negative roots that are ordinary or mixed). Let $\g_{\hat 0} \coloneqq \mathfrak{t}(X)$, where $\hat 0$ is the zero weight of $T_0$ (such notation is used to not confuse with $\g_0$, the underlying even Lie algebra). We can write $\g_{\epsilon_i - \epsilon_j} = X_i \otimes X_j^*$, and we notice that the bracket restricted to $\g_\alpha \otimes \g_\beta$ lands in $\g_{\alpha + \beta}$ for $\alpha, \beta \in \Phi$ such that $\alpha + \beta$ is a root or zero.  This map is surjective unless $\alpha = -\beta$. 

\begin{rem}
    The root-space decomposition here is just our first pass at such a notion. It is unclear at this time what the right definition of a root system for Lie algebras in $\Ver_p$ should be. What we call ``mixed roots'' are called ``odd'' roots when restricting to the category of supervector spaces but it doesn't seem right to use the term ``odd'' here, especially considering the fact that $\Ver_p$ admits the decomposition $\Ver_p = \Ver_p^+ \boxtimes \sVec_\KK$.
\end{rem}

\section{Frobenius Kernels }\label{sec:frobkernels}

\subsection{The Frobenius twist and the Frobenius morphism}
First let us recall the ordinary Frobenius morphism in the ordinary and super settings. Let $\mathcal{D}$ denote either the category $\Vecc_\KK$ of vector spaces or the category $\sVec_\KK$ of supervector spaces. The \textit{Frobenius twist} is the functor given by sending a vector space $V$ to $V^{(1)}$, where $V^{(1)}$ is the same underlying vector space as $V$ but with the action of $\KK$ twisted by the $p$-th power map $f$ i.e. we can write $V^{(1)} = V \otimes_f \KK$. In this language it is clear that if $A$ is a commutative or Hopf algebra in $\mathcal{D}$ then so is $A^{(1)}$ and that there is a natural homomorphism of algebras $A^{(1)} \rightarrow A$ given by $a \otimes \lambda \mapsto \lambda^p a$. 
\par
Now, if $G$ is an affine group scheme in $\mathcal{D}$, this induces a morphism $G \rightarrow G^{(1)}$, where $G^{(1)}$ is the affine group scheme given by base-changing $G$ with respect to $f$. This map is called \textit{the Frobenius morphism}, and it is a natural transformation of endofunctors on $\mathcal{D}$. If $G$ is defined over the prime subfield of $\KK$, then $G^{(1)}$ can be identified with $G$.
\begin{example}
    For the group scheme $GL(n)$, the Frobenius morphism $F: GL(n) \rightarrow GL(n)$ on each $GL(n)(A)$ for a commutative algebra $A$ over $\KK$ is given by raising each entry of the $A$-valued invertible matrix $GL(n)(A)$ to the $p$-th power.
\end{example}
The Frobenius morphism plays a critical role in understanding the representation theory of affine group schemes in positive characteristic. For instance, it is greatly utilized in showing the finite generation of cohomology of finite group schemes in $\mathcal{D}$ (see \cite{friedlander1997cohomology,drupieski2016cohomological}).
\par
Therefore, it is reasonable that for any symmetric tensor category we want a similar notion. However, the definition here makes use of elements, so it is not suitable for our purposes. Instead, we appeal to a categorical definition of the Frobenius morphism which specializes to the definition above and was introduced in \cite{coulembier2020tannakian} and investigated further in \cite{etingof2020frobenius,coulembier2022frobenius}.
\begin{Def}
    Let $\mathcal{C}$ be a symmetric tensor category over $\KK$. The $r$-th \textit{internal Frobenius twist} of an object $X \in \mathcal{C}$ is the image of the following composition of maps:

    \[\Gamma^{p^r}(X) \xhookrightarrow{} X^{\otimes p^r} \twoheadrightarrow S^{p^r}(X),\]
    where $\Gamma^d$ is the $d$-th divided power functor, $S^d$ is the $d$-th symmetric power functor, and the maps are the obvious inclusions and projections, respectively. We will denote the $r$-th Frobenius twist of $X$ as $X^{(r)}$.
\end{Def}
For our purposes we do not consider the so-called \textit{external Frobenius twist}, so we shall drop the adjective ``internal'' when referring to Frobenius twists. The $r$-th Frobenius twist defines an additive functor $\mathcal{C} \rightarrow \mathcal{C}$; let us denote the first Frobenius twist by $\Fr_+$. In the case $\mathcal{C}$ admits a symmetric tensor functor to $\Ver_p$, we have the following properties (see \cite{coulembier2020tannakian, coulembier2022frobenius}):\footnote{Indeed, such categories are called \textit{Frobenius-exact} as the proposition makes clear.}

\begin{prop}
    Let $\mathcal{C}$ be a symmetric tensor category over $\KK$ that admits a symmetric tensor functor to $\Ver_p$. Then, the following are true:
    \begin{enumerate}
        \item The $r$-th Frobenius twist is given by composing $\Fr_+$ $r$-times. Hence, it makes sense to denote it as $\Fr_+^r$ and write $X^{(r)} = \Fr_+^r(X)$.
        \item The functor $\Fr_+$ is exact.
        \item In the case $\mathcal{C} = \Vecc_\KK$ or $\sVec_\KK$, the Frobenius twist coincides with the usual definition of the Frobenius twist, so our notation is consistent with that in the start of this section.\footnote{Although we assume $\KK$ to be algebraically closed, this statement only requires that $\KK$ be perfect.}
        \item If $\mathcal{C} = \Ver_p$, then given any object $X \in \Ver_p$, the object $\Fr_+(X)$ is isomorphic to the isotypic component of the unit object in $X$. In particular, $\Fr_+$ annihilates the objects $L_2, \dots, L_{p-1}$.
    \end{enumerate}   
\end{prop}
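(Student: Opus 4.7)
The plan is to verify the four assertions in sequence, drawing heavily on the Coulembier--Etingof program on Frobenius functors in symmetric tensor categories. Part (2) requires nothing beyond citing the main equivalence of \cite{coulembier2022frobenius}: a symmetric tensor category is Frobenius-exact, meaning its Frobenius twist is exact, if and only if it admits a symmetric tensor functor to $\Ver_p$. The hypothesis then yields exactness at once.

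Part (3) is by a direct basis-level calculation. For $V \in \Vecc_\KK$ with basis $\{e_i\}_{i \in I}$, the divided power $\Gamma^p(V)$ has basis the divided monomials $e_{i_1}^{(a_1)} \cdots e_{i_k}^{(a_k)}$ with $\sum_j a_j = p$, and under the composition $\Gamma^p(V) \hookrightarrow V^{\otimes p} \twoheadrightarrow S^p(V)$ such a monomial maps to $\binom{p}{a_1, \ldots, a_k}\, e_{i_1}^{a_1}\cdots e_{i_k}^{a_k}$. Modulo $p$ this vanishes unless some $a_j = p$, so the image is spanned by $\{e_i^p\}_{i \in I}$, and the induced $\KK$-action identifies it with the classical Frobenius twist $V^{(1)}$. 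The supervector space case is parallel with appropriate sign conventions for the braiding.

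For (1), I would construct natural comparison maps $\Gamma^{p^r}(X) \to \Gamma^{p^{r-1}}(\Gamma^p(X))$ and $S^p(S^{p^{r-1}}(X)) \to S^{p^r}(X)$ arising from the decomposition $X^{\otimes p^r} \cong (X^{\otimes p})^{\otimes p^{r-1}}$, and check inductively in $r$ that these are compatible with the canonical inclusions $\Gamma^n \hookrightarrow (-)^{\otimes n}$ and projections $(-)^{\otimes n} \twoheadrightarrow S^n$. The essential input is a categorical Lucas-type identity on multinomial coefficients modulo $p$, which is formulated precisely in \cite{coulembier2020tannakian,etingof2020frobenius}; iterating the image construction then gives $X^{(r)} \cong \Fr_+^r(X)$.

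For (4), it is most efficient to observe that $\Fr_+ \colon \Ver_p \to \Ver_p$ is a symmetric monoidal additive functor and then compute its value on each simple. The case $L_1 = \un$ is trivial. For $k > 1$, one can compute $\Gamma^p(L_k) \to S^p(L_k)$ inside the non-semisimple category $\Rep \KK[t]/(t^p)$ whose semisimplification is $\Ver_p$, using that $L_k = \overline{J_k}$ is the image of the $k$-dimensional Jordan block, and verify that the image is a direct sum of indecomposable summands of dimension divisible by $p$, hence of categorical dimension zero. Consequently $\Fr_+(L_k) = 0$ in $\Ver_p$ for $k > 1$, which combined with exactness and additivity yields the stated description $\Fr_+(X) \cong X_0$ for general $X \in \Ver_p$. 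The main obstacle across the proposition is part (1), but as noted the categorical combinatorics has already been established in the cited literature, so the remaining work is mostly bookkeeping.
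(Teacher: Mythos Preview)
The paper does not prove this proposition; it merely records it with a citation to \cite{coulembier2020tannakian, coulembier2022frobenius}, so there is no in-paper argument to compare against. Your handling of parts (1)--(3) is in line with that literature: the references you invoke are exactly the ones the paper defers to, and your basis computation for (3) is the standard one.

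Your argument for (4), however, contains a genuine error. You propose to compute the image of $\Gamma^p(J_k) \to S^p(J_k)$ in $\Rep \KK[t]/(t^p)$ and then claim this image decomposes into indecomposables of dimension divisible by $p$. This is false already for $k=2$. Writing $J_2 = \langle e_1, e_2 \rangle$ with $t e_2 = e_1$ and $t e_1 = 0$, the image is spanned by $e_1^p$ and $e_2^p$, both of which are killed by $t$; hence the image is $J_1 \oplus J_1$, not a multiple of $J_p$. The underlying issue is that the semisimplification functor $\Rep \KK[t]/(t^p) \to \Ver_p$ is additive and monoidal but \emph{not exact}, so it does not preserve images: you cannot compute $\Fr_+(L_k)$ by first taking the image upstairs and then semisimplifying.

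A corrected lift-and-descend argument decomposes both $\Gamma^p(J_k)$ and $S^p(J_k)$ into indecomposables, discards the $J_p$-summands (which vanish upon semisimplification), and then verifies that the induced map between the surviving summands is zero in $\Ver_p$. For $k=2$ one finds that the $J_1$-summand of $\Gamma^p(J_2)$ is spanned by $e_1^{(p)}$, which maps to $e_1^p$; but $e_1^p$ lies in the $J_p$-summand of $S^p(J_2)$ (whose $J_1$-summand is spanned by $e_2^p$), so after projecting away $J_p$ the map is zero. Alternatively, compute directly in $\Ver_p$ using the known decompositions of $S^p(L_k)$ and $\Gamma^p(L_k)$, as in \cite{ostrik2020symmetric}.
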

Now, suppose that we have an affine group scheme $G$ in $\Ver_p$ with coordinate ring $\mathcal{O}(G)$ and ordinary subgroup $G_0$. Let $J \subset \mathcal{O}(G)$ be the ideal defining $\mathcal{O}(G_0) = \mathcal{O}(G)/J$, recalling that $J$ is generated by the maximal subobject which is a direct sum of objects $L_i$ with $i \neq 1$. Then, we have the following proposition (see \cite[\S8.1.4]{coulembier2022frobenius}):

\begin{prop}\label{frobhom}
    Let $G$ be an affine group scheme in $\Ver_p$. The Frobenius twist $\Fr_+$ induces a group homomorphism $\Fr_+: G \rightarrow G_0^{(1)}$, the first Frobenius twist of $G_0$ in the ordinary sense. We call this map the \textit{Frobenius morphism}.
\end{prop}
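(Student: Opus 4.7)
The plan is to construct the desired homomorphism dually, as a Hopf algebra map $\phi: \mathcal{O}(G_0^{(1)}) \to \mathcal{O}(G)$ in $\Ver_p^{ind}$, and then invoke the anti-equivalence between affine group schemes in $\Ver_p$ and commutative Hopf algebras in $\Ver_p^{ind}$. The heart of the construction is a natural Frobenius-type map $\phi_0: \Fr_+(\mathcal{O}(G)) \to \mathcal{O}(G)$ obtained directly from the definition: since $\Fr_+(\mathcal{O}(G))$ is by definition the image of the composition $\Gamma^p(\mathcal{O}(G)) \hookrightarrow \mathcal{O}(G)^{\otimes p} \twoheadrightarrow S^p(\mathcal{O}(G))$, and since $\mathcal{O}(G)$ is commutative the $p$-fold multiplication $\mathcal{O}(G)^{\otimes p} \to \mathcal{O}(G)$ descends through $S^p(\mathcal{O}(G))$, restricting to the subobject $\Fr_+(\mathcal{O}(G))$ produces $\phi_0$. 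This is the categorical analogue of the classical Frobenius $a \mapsto a^p$.

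Next I would identify $\Fr_+(\mathcal{O}(G))$ with $\mathcal{O}(G_0^{(1)})$ using three inputs: (i) property (3) gives $\Fr_+(\mathcal{O}(G_0)) = \mathcal{O}(G_0)^{(1)} = \mathcal{O}(G_0^{(1)})$, since $\mathcal{O}(G_0)$ lives in $\Vecc_\KK \subseteq \Ver_p$; (ii) exactness of $\Fr_+$ applied to the surjection $\pi: \mathcal{O}(G) \twoheadrightarrow \mathcal{O}(G_0)$ yields a surjection $\Fr_+(\pi): \Fr_+(\mathcal{O}(G)) \twoheadrightarrow \mathcal{O}(G_0^{(1)})$; (iii) the decomposition $\mathcal{O}(G) \cong \mathcal{O}(G_0) \otimes S(\Lie(G)_{\neq 0}^*)$ as left $\mathcal{O}(G_0)$-comodule algebras, combined with the vanishing of $\Fr_+$ on objects without a $\un$-isotypic component (property (4)), to force the kernel of $\Fr_+(\pi)$ to be zero. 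The resulting identification intertwines the two Hopf algebra structures because $\Fr_+$ is a symmetric tensor functor on any Frobenius-exact category, as developed in \cite{coulembier2022frobenius}, so it transports the Hopf axioms of $\mathcal{O}(G)$ into compatible axioms on $\Fr_+(\mathcal{O}(G)) \cong \mathcal{O}(G_0^{(1)})$. Composing the inverse of this identification with $\phi_0$ yields $\phi$, and checking multiplicativity, comultiplicativity, and compatibility with unit, counit, and antipode is then routine from naturality.

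The hard part will be input (iii), namely showing that $\Fr_+$ genuinely annihilates the non-unit-isotypic part of $\mathcal{O}(G)$. A priori this is subtle, since symmetric and tensor powers of objects in $\Ver_p$ that lack a $\un$-isotypic component can themselves acquire one (for instance $\un \hookrightarrow L_k \otimes L_k^* \hookrightarrow S^2(L_k \oplus L_k^*)$), so the vanishing is not an immediate consequence of property (4) applied termwise. The correct route is to leverage the framework of \cite{coulembier2022frobenius}, which realizes $\Fr_+$ as a symmetric tensor functor to a well-behaved target and thereby controls its interaction with the algebra filtration $\mathcal{O}(G_0) \otimes S^{\geq 1}(\Lie(G)_{\neq 0}^*) \subset \mathcal{O}(G)$; this reduces the required vanishing to the computation of $\Fr_+$ on single simples $L_k$ with $k>1$.
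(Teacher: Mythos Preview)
There is a genuine gap in your step (iii). The map $\Fr_+(\pi)$ is \emph{not} an isomorphism: its kernel $\Fr_+(J)$ is typically nonzero, precisely because of the phenomenon you yourself flag. Property (4) says that on $\Ver_p$ the functor $\Fr_+$ is naturally isomorphic to $X \mapsto \Hom(\un, X)\otimes \un$, so $\Fr_+(J)$ is the $\un$-isotypic part of $J$, which is nonzero as soon as $J$ contains any copy of $\un$---and your own example $\un \hookrightarrow L_k \otimes L_k^*$ shows that it does. Your proposed rescue, that the framework of \cite{coulembier2022frobenius} makes $\Fr_+$ a symmetric tensor functor, does not apply here: the \emph{internal} Frobenius $\Fr_+\colon \Ver_p \to \Ver_p$ is additive and exact but not monoidal. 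Indeed $\Fr_+(L_k \otimes L_k^*) \supseteq \un$ while $\Fr_+(L_k)\otimes \Fr_+(L_k^*) = 0$ for $k>1$, so no monoidal structure can exist. (The tensor functor in \cite{coulembier2022frobenius} is a different, enriched Frobenius landing in $\mathcal{C}^{(1)}\boxtimes \Ver_p$; it does not descend to a tensor functor $\Ver_p \to \Ver_p$.) Consequently the identification $\Fr_+(\mathcal{O}(G)) \cong \mathcal{O}(G_0^{(1)})$ that your argument hinges on is simply false.

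The paper avoids this trap by never attempting that identification. It constructs the same composite $\mathcal{O}(G)^{(1)} \hookrightarrow S^p(\mathcal{O}(G)) \xrightarrow{\text{mult}} \mathcal{O}(G)$ as you do, but then argues that this map \emph{vanishes on} $J^{(1)}$, so that it factors through the quotient $\mathcal{O}(G)^{(1)}/J^{(1)} \cong \mathcal{O}(G_0)^{(1)} = \mathcal{O}(G_0^{(1)})$. This is a strictly weaker claim than $J^{(1)}=0$, and it is the one that is actually true. The distinction matters: you need the quotient map $\Fr_+(\mathcal{O}(G)) \twoheadrightarrow \mathcal{O}(G_0^{(1)})$, not an isomorphism, and then a separate argument that the ``$p$-th power'' map kills the kernel. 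That second step is where the fact $S^p(L_i)=0$ for $i\neq 1$ enters, not in trying to make $\Fr_+(J)$ vanish as an object.
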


\begin{proof}
    By the functoriality of the Frobenius twist, $\mathcal{O}(G)^{(1)}$ defines an affine group scheme. Then, we consider the following sequence of maps

    \[\mathcal{O}(G)^{(1)} \xhookrightarrow{} S^p(\mathcal{O}(G)) \rightarrow \mathcal{O}(G),\]    
    where the first map is the inclusion and the second map is the $p-$fold multiplication map on $\mathcal{O}(G)^{\otimes p}$, which by commutativity of multiplication factors through $S^{p}(\mathcal{O}(G))$. Now, because $\Fr_+$ is exact and additive and because $S^p(L_i) = 0$ for all simple objects $L_i$ with $i \neq 1$, the image of $J^{(1)}$ under the morphism is zero. Therefore, the map factors through the quotient $\mathcal{O}(G)^{(1)}/J^{(1)}$, which is the coordinate ring of $G_0^{(1)}$. This induces a group homomorphism $G \rightarrow G_0^{(1)}$. 
\end{proof}
This motivates the following definition:

\begin{Def}
    The first \textit{Frobenius kernel} of an affine group scheme $G$ is the kernel of the group homomorphism in Proposition \ref{frobhom}. We will denote it as $G_{(1)}$. Similarly, we can define the $r$-th Frobenius kernel $G_{(r)}$ as the kernel of the group homomorphism induced by $r$-iterates of the Frobenius twist.
\end{Def}
Now, let us deduce some basic properties about Frobenius kernels, most of which are familiar from the ordinary setting. 

\begin{prop}\label{frob_kernel_props}
    Let $G$ be an affine group scheme in a symmetric tensor category $\mathcal{C}$ with symmetric tensor functor to $\Ver_p$. Then, the following are true:

    \begin{enumerate}
        \item $(G_0)_{(r)} = (G_{(r)})_{0}$ and $\Lie(G_{(r)})_{\neq 0} = \Lie(G)_{\neq 0}$ for all $r \geq 1$, which implies $\Lie(G_{(r)}) = \Lie(G)$.
        \item The Frobenius kernel $G_{(r)}$ is a normal subgroup of $G$.  
        \item There exists counit preserving isomorphisms $\mathcal{O}(G_{(r)}) \cong \mathcal{O}((G_0)_{(r)}) \otimes S(\Lie(G)_{\neq 0}^*)$ of left $\mathcal{O}((G_0)_{(r)})$ comodule superalgebras.
        \item The Frobenius kernel $G_{(r)}$ is \textit{infinitesimal}, meaning that the augmentation ideal of $\mathcal{O}(G_{(r)})$ is nilpotent.
    \end{enumerate}
\end{prop}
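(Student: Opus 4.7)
The plan is to handle all four parts together using the Harish-Chandra pair correspondence (Theorem \ref{hcpairs_theorem}) and the structural decomposition $\mathcal{O}(G) \cong \mathcal{O}(G_0) \otimes S(\Lie(G)_{\neq 0}^*)$ recalled just before it. The crucial preliminary is to unwind the Frobenius morphism $\Fr_+^r : G \to G_0^{(r)}$ at the level of coordinate rings. Iterating the construction in Proposition \ref{frobhom}, the comorphism $\phi : \mathcal{O}(G_0)^{(r)} \to \mathcal{O}(G)$ factors through the $p^r$-fold multiplication map $\mathcal{O}(G)^{\otimes p^r} \to \mathcal{O}(G)$, and hence sends the augmentation ideal $I'$ of $\mathcal{O}(G_0)^{(r)}$ into $I^{p^r}$, where $I$ is the augmentation ideal of $\mathcal{O}(G)$.

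For part (1), restricting the Frobenius morphism to ordinary commutative algebras yields the ordinary Frobenius morphism on $G_0$, whose kernel is by definition $(G_0)_{(r)}$; evaluating $G_{(r)}$ on any ordinary algebra then gives $(G_{(r)})_0 = (G_0)_{(r)}$. For the Lie algebra, observe that $\mathcal{O}(G_{(r)}) = \mathcal{O}(G)/\langle \phi(I') \rangle$ and $\phi(I') \subseteq I^{p^r} \subseteq I^2$ for $r \geq 1$, so the augmentation ideal $I_r$ of $\mathcal{O}(G_{(r)})$ satisfies $I_r/I_r^2 \cong I/I^2$. Dualizing gives $\Lie(G_{(r)}) \cong \Lie(G)$, and in particular $\Lie(G_{(r)})_{\neq 0} = \Lie(G)_{\neq 0}$.

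Part (2) is then immediate from the description of normal subgroups given after Theorem \ref{hcpairs_theorem}: $(G_0)_{(r)}$ is classically normal in $G_0$, and $\Lie(G_{(r)}) = \Lie(G)$ is trivially a $(G_0)_{(r)}$-stable ideal in itself. Part (3) follows by applying the coordinate ring decomposition to $G_{(r)}$ and substituting the identifications $(G_{(r)})_0 = (G_0)_{(r)}$ and $\Lie(G_{(r)})_{\neq 0} = \Lie(G)_{\neq 0}$ from (1). For part (4), the decomposition in (3) presents the augmentation ideal of $\mathcal{O}(G_{(r)})$ as a sum of two pieces, each nilpotent: the augmentation ideal of $\mathcal{O}((G_0)_{(r)})$ is nilpotent since ordinary Frobenius kernels are infinitesimal (a classical result, see \cite{jantzen2003representations}), and the augmentation ideal of $S(\Lie(G)_{\neq 0}^*)$ is nilpotent since $S(V)$ is finite-dimensional in $\Ver_p$ whenever $V$ is a sum of simples $L_i$ with $i > 1$, as noted in the discussion following the definition of the distribution algebra.

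The main obstacle I anticipate is the bookkeeping needed to rigorously establish $\phi(I') \subseteq I^{p^r}$ in the symmetric-tensor-categorical setting: the factorization $\mathcal{O}(G)^{(r)} \hookrightarrow S^{p^r}(\mathcal{O}(G)) \twoheadrightarrow \mathcal{O}(G)$ from Proposition \ref{frobhom} must be iterated $r$ times and shown to be compatible with the quotient by the appropriate powers of the ideal $J$ defining $G_0$ inside $G$. Once this point is settled, everything else reduces cleanly to the Harish-Chandra pair machinery together with standard facts from the ordinary theory of Frobenius kernels.
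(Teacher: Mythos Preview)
Your proposal is correct and follows essentially the same approach as the paper, which simply asserts that all four statements are immediate once one views the Frobenius morphism at the level of Harish-Chandra pairs and defers to \cite{masuoka2013} for the super case. Your write-up supplies considerably more detail than the paper's one-line proof, but the underlying strategy---reducing to the ordinary Frobenius kernel of $G_0$ together with the structural decomposition $\mathcal{O}(G) \cong \mathcal{O}(G_0) \otimes S(\Lie(G)_{\neq 0}^*)$---is identical.
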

\begin{proof}
    All four statements are more or less immediate when viewing the Frobenius morphism at the level of Harish-Chandra pairs. See \cite{masuoka2013} for the proofs in the super setting, which carry over to our setting.
\end{proof}

\subsection{Restricted Lie Algebras}
Recall that in the ordinary vector space setting the representation theory of the first Frobenius kernel is captured by the so-called restricted representations of its Lie algebra, which itself admits the structure of a \textit{restricted Lie algebra}. Over supervector spaces, a restricted Lie superalgebra is one whose even part is a restricted Lie algebra and whose odd part is a restricted module over the even part. Such a notion affords a natural generalization to $\Ver_p$, which we will now describe, along with other properties.

\begin{Def}
    A Lie algebra $\mathfrak{g}$ in $\Ver_p$ is a \textit{restricted Lie algebra} if the ordinary part $\g_0$ of $\g$ is a restricted Lie algebra in the usual sense i.e. it has a $p$-mapping $[p]: \g_0 \rightarrow \g_0$ satisfying $\ad(x^{[p]}) = \ad(x)^p$ among other properties (see \cite{demazure1970groupes} for the full definition) 
    and such that $\g_{\neq 0}$ is a restricted $\g_0$-module via the adjoint representation, i.e. 
    
    \[\ad(x)^p - \ad(x^{[p]}): \g_{\neq 0} \rightarrow \g_{\neq 0}\]
    is the zero map for all $x \in \g_0$.
\end{Def}
\begin{prop}
    Let $G$ be an affine group scheme in $\Ver_p$. Then, $\Lie(G)$ is a restricted Lie algebra with respect to the $p$-mapping given by $x \mapsto x^p$ for $x \in \Lie(G)_0$ with multiplication coming from $\Dist(G)$.
\end{prop}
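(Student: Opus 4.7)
The plan is to reduce the restrictedness check to computations inside the associative algebra $\Dist(G)$, inside of which $\Lie(G)$ sits as a subobject closed under the categorical commutator. First I would observe that $\Lie(G)_0 = \Lie(G_0)$ and $G_0$ is an ordinary affine group scheme of finite type, so classical hyperalgebra theory endows $\Lie(G)_0$ with its standard restricted Lie algebra structure whose $p$-mapping $x \mapsto x^{[p]}$ is the $p$-th power in $\Dist(G_0)$. Because $\Dist(G_0) \subseteq \Dist(G)$ as an associative subalgebra, this $p$-mapping coincides with the $p$-th power taken in $\Dist(G)$, establishing the first requirement.

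Next I would verify the remaining condition: for every $x \in \Lie(G)_0$, the operator $\ad(x)^p - \ad(x^{[p]})$ vanishes on $\Lie(G)_{\neq 0}$. The key observation is that $\Lie(G)_0$ sits inside the isotypic component of the unit in $\Dist(G)$, so any such $x$ braids trivially with every element of $\Dist(G)$. Consequently, the left and right multiplication operators $L_x, R_x : \Dist(G) \to \Dist(G)$ honestly commute, and the freshman's dream in characteristic $p$ yields
\[\ad(x)^p = (L_x - R_x)^p = L_x^p - R_x^p = L_{x^p} - R_{x^p} = \ad(x^p).\]
Since $x^{[p]} = x^p$ by construction, this identity holds on all of $\Dist(G)$, and in particular on the subobject $\Lie(G)_{\neq 0}$. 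The same triviality of the braiding shows that $\ad(x) = L_x - R_x$ commutes with the $\pi(\Ver_p)$-coaction on $\Dist(G)$, so $\ad(x)$ preserves the isotypic decomposition and in particular stabilizes $\Lie(G)_{\neq 0}$, so that the restricted-module condition is well-posed.

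The main obstacle is purely bookkeeping: one must confirm that the categorical adjoint action, defined via the bracket obtained from the associative product of $\Dist(G)$ together with the braiding, reduces to the naive $L_x - R_x$ whenever $x$ lies in the unit-isotypic component, and that the $p$-th power in $\Dist(G)$ of such an $x$ lands back in $\Lie(G)_0$ and agrees with the classical $p$-mapping on $\Lie(G_0)$. Both of these follow from triviality of the braiding on the unit-isotypic part of any object, together with the Harish-Chandra identification $\Dist(G_0) = \Dist(G)_0$ reviewed earlier in the paper. Once these points are in place, the argument is essentially identical to the one carried out in the super setting, which carries over verbatim to $\Ver_p$.
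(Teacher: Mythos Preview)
Your proposal is correct and follows essentially the same argument as the paper: invoke the classical theory for $\Lie(G)_0 = \Lie(G_0)$, then verify the restricted-module condition on $\Lie(G)_{\neq 0}$ by writing $\ad(x) = L_x - R_x$ inside $\Dist(G)$ and applying the freshman's dream $(L_x - R_x)^p = L_{x^p} - R_{x^p}$. Your additional remarks on why the categorical bracket reduces to $L_x - R_x$ via triviality of the braiding on the unit-isotypic component are more explicit than the paper (which simply asserts this identification and appeals only to associativity for the commutation of $L_x$ and $R_x$), but the substance of the proof is identical.
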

\begin{proof}
    Because $\Lie(G)_0 = \Lie(G_0)$, the ordinary theory tells us that $\Lie(G)_0$ is restricted. Moreover, for each $x \in \Lie(G)_0$, the adjoint action of $x$ on $\Lie(G)_{\neq 0}$ is given by $L_x - R_x$, where $L_x$ and $R_x$ are left and right multiplication, by $x$, respectively. Because multiplication is associative, $L_x$ and $R_x$ commute, the binomial theorem tells us that $(L_x - R_x)^p = L_x^p - R_x^p = L_{x^p} - R_{x^p}$, which means that $\Lie(G)_{\neq 0}$ is a restricted module over $\Lie(G)_0$. 
\end{proof}
Attached to a restricted Lie algebra is its \textit{restricted universal enveloping algebra}, which we will now define:
\begin{Def}
    The \textit{restricted universal enveloping algebra} $U^{[p]}(\g)$ of a restricted Lie algebra $\g$ is the quotient of the usual universal enveloping algebra $U(\g)$ by the ideal generated by the elements $x^{[p]} - x^p$ for $x \in \g_0$.\footnote{In the case $\g = \Lie(G)$ for an affine group scheme $G$, we need to be careful about conflating the multiplication in $\Dist(G)$ with that in $U(\Lie(G))$.}
\end{Def}
\begin{prop}
    Let $G$ be an affine group scheme in $\Ver_p$. Then, the distribution algebra $\Dist(G_{(1)})$ of the first Frobenius kernel of $G$ is isomorphic as a Hopf algebra to the restricted universal enveloping algebra $U^{[p]}(\Lie(G))$.
\end{prop}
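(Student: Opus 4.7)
The plan is to produce a Hopf algebra map $\Phi: U^{[p]}(\Lie(G)) \to \Dist(G_{(1)})$ and verify it is an isomorphism by reducing to the classical case on the ordinary part and to a tautology on the non-ordinary part.

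First, I would construct $\Phi$. By Proposition~\ref{frob_kernel_props}(1), $\Lie(G_{(1)}) = \Lie(G)$, so $\Lie(G)$ sits inside $\Dist(G_{(1)})$ as the subobject of primitives, giving a Lie algebra map $\Lie(G) \hookrightarrow \Dist(G_{(1)})$. The universal property of the operadic enveloping algebra in $\Ver_p$ extends this to a Hopf algebra map $U(\Lie(G)) \to \Dist(G_{(1)})$. To see that it factors through $U^{[p]}$, note that for $x \in \Lie(G)_0 = \Lie(G_0)$ the element $x$ already lies in the sub-Hopf-algebra $\Dist((G_{(1)})_0) = \Dist((G_0)_{(1)})$, which by the classical theorem is $U^{[p]}(\Lie(G_0))$. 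Hence $x^p = x^{[p]}$ already holds in $\Dist((G_0)_{(1)}) \subset \Dist(G_{(1)})$, so the map descends to the promised $\Phi: U^{[p]}(\Lie(G)) \to \Dist(G_{(1)})$.

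Second, I would verify that $\Phi$ is an isomorphism by comparing structural decompositions of the two sides. Dualizing the decomposition of Proposition~\ref{frob_kernel_props}(3), and using that the symmetric algebras $S(L_i)$ are finite-dimensional in $\Ver_p$ for $i \neq 1$ (since $S^p(L_i) = 0$) and satisfy $S(V^*)^* \cong S(V)$ in this finite-dimensional regime, we obtain
\[
\Dist(G_{(1)}) \;\cong\; \Dist((G_0)_{(1)}) \otimes S(\Lie(G)_{\neq 0})
\]
as $\Dist((G_0)_{(1)})$-module coalgebras. On the source side, a PBW-style splitting for the restricted enveloping algebra in $\Ver_p$, built from the decomposition $\Lie(G) = \Lie(G)_0 \oplus \Lie(G)_{\neq 0}$ and the $\Lie(G)_0$-module structure on $\Lie(G)_{\neq 0}$, yields $U^{[p]}(\Lie(G)) \cong U^{[p]}(\Lie(G)_0) \otimes S(\Lie(G)_{\neq 0})$ as left modules and coalgebras. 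The map $\Phi$ is compatible with both decompositions: on the ordinary factor it restricts to the classical isomorphism $U^{[p]}(\Lie(G_0)) \cong \Dist((G_0)_{(1)})$, and on the non-ordinary factor it is the identity on $\Lie(G)_{\neq 0}$ extended multiplicatively, giving a bijection on each graded piece.

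The main obstacle is establishing the PBW-style splitting for $U^{[p]}$ in $\Ver_p$ and justifying the identification $S(\Lie(G)_{\neq 0}^*)^* \cong S(\Lie(G)_{\neq 0})$. The usual divided-power obstruction in positive characteristic collapses here because $S^p$ annihilates all non-unit simples, so only low-degree symmetric powers contribute and divided powers coincide with symmetric powers on the relevant range. The analogous assertions in the super setting are standard (cf. \cite{masuoka2013}), and the argument in \emph{loc.~cit.} carries over once these $\Ver_p$-specific observations are in place.
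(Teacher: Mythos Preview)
Your proposal is correct and follows essentially the same route as the paper: construct the map via the universal property of $U(\Lie(G))$, factor it through $U^{[p]}$ using the classical relation $x^p = x^{[p]}$ in $\Dist((G_0)_{(1)})$, and then compare the PBW-type splittings $U^{[p]}(\Lie(G)_0)\otimes S(\Lie(G)_{\neq 0})$ and $\Dist((G_0)_{(1)})\otimes S(\Lie(G)_{\neq 0})$, invoking the classical isomorphism on the ordinary factor. The only cosmetic difference is that the paper finishes with a dimension count (``same size, sends generators to generators'') whereas you argue compatibility with the decompositions and bijectivity on graded pieces; your extra remarks on $S(V^*)^*\cong S(V)$ and the collapse of the divided-power obstruction simply make explicit what the paper leaves implicit.
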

\begin{proof}
    By the universal property of $U(\Lie(G_{(1)}))$, we have an algebra homomorphism from $U(\Lie(G_{(1)}))$ to $\Dist(G_{(1)})$. This factors through $U^{[p]}(\Lie(G))$ as $\Lie(G_0)$ is stable under the $p$-th power map by the ordinary theory (seen by identifying it with left-derivations of $\mathcal{O}(G_0)$). Moreover, by the PBW theorem, we have an isomorphism of objects $U^{[p]}(\Lie(G)) \cong U^{[p]}(\Lie(G)_0) \otimes S(\Lie(G)_{\neq 0})$.  Furthermore, as $\Dist((G_0)_{(1)})$-modules, we have an isomorphism $\Dist(G_{(1)}) \cong \Dist((G_0)_{(1)}) \otimes S(\Lie(G)_{\neq 0})$. By the ordinary theory of restricted enveloping algebras, we have an isomorphism of Hopf algebras $U^{[p]}((G_0)_{(1)}) \cong \Dist((G_0)_{(1)})$. Putting this all together, we have a morphism of Hopf algebras between two finite-dimensional algebras of the same size that sends generators to generators, so it is an isomorphism.
\end{proof}
A consequence of this theorem is that the coordinate ring of $G_{(1)}$ is the dual of $U^{[p]}(\Lie(G))$, which implies that the restricted representations of $\Lie(G)$ correspond to representations of $G_{(1)}$, which is what we expected.
\subsection{Representations of Frobenius Kernels of $GL(X)$}\label{sec:frob_reps}
In this subsection we will construct the simple modules over the Frobenius kernels of the general linear group scheme $G \coloneqq GL(X)$ for an object $X \in \Ver_p$ with $X = \bigoplus_{k=1}^{p-1} L_k^{\oplus n_k}$. Let $n = \sum_{k=1}^{p-1} n_k$.
\par
Let us denote by $T$ the standard maximal torus in $GL(X)$, by $B$ the standard Borel subgroup $B(X)$ of $GL(X)$, and by $N^-$ the strictly lower triangular matrices $N^-(X)$ (see \S\ref{sec:glx}). Similarly, let us denote their Lie algebras as $\mathfrak{t}$, $\mathfrak{b}$, and $\mathfrak{n}^-$, respectively. The corresponding Harish-Chandra pairs are $(T_0, \mathfrak{t})$, $(B_0, \mathfrak{b})$, and $(N^-_0, \mathfrak{n}^-)$, respectively. Recall $\Lambda(T)$ is the character lattice of $T_0$.
\par
We will use a standard highest-weight argument via Verma modules to construct the simple $G_{(r)}$-modules. By the PBW theorem and Proposition \ref{frob_kernel_props}, the triangular decomposition on $G$ descends to one on $G_{(r)}$, where the role of $T$, $B$, and $N^-$ are played by $T_{(r)}$, $B_{(r)}$, and $N^-_{(r)}$, respectively.

\begin{prop}\label{prop:frob_kernel_mods}
    The simple $T_{(r)}$-modules, simple $B_{(r)}$-modules, and simple $G_{(r)}$-modules are indexed by the set $\{(\lambda, V)\}$, where 
    
    \[V \in \Irr\left({\bigboxtimes}_{k=1}^{p-1}\Ver_p^+(SL(k))^{\boxtimes n_k}\right)\]  
    and $\lambda \in \Lambda(T)$ is any character. Two such modules $(\lambda, V)$ and $(\mu, W)$ are isomorphic if and only if $\lambda - \mu \in p^r \Lambda(T)$ and $V \cong W$. We will denote the $G_{(r)}$-module corresponding to $(\lambda, V)$ as $L_{(r)}(\lambda, V)$.
\end{prop}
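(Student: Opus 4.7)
The plan is to adapt the standard triangular-decomposition and Verma-module classification of simple modules over Frobenius kernels of reductive groups to the Verlinde setting, handling the three cases $T_{(r)}$, $B_{(r)}$, $G_{(r)}$ in order. Throughout I would work with Harish--Chandra pairs and $T_{(r)}$-isotypic decompositions rather than individual vectors.

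For $T = T(X)$, the Harish--Chandra pair factors as a product $T(X) \cong \prod_{i=1}^{n} GL(X_i)$ along the simple summands $X_i$ of $X$, and this descends to Frobenius kernels, yielding $T_{(r)} \cong \prod_{i} GL(X_i)_{(r)}$. For each factor $GL(L_k)$, the Frobenius morphism factors through $GL(L_k)_0 = \mathbb{G}_m$, so $GL(L_k)_{(r)}$ corresponds to the Harish--Chandra pair $\bigl((\mathbb{G}_m)_{(r)}, \mathfrak{gl}(L_k)\bigr)$. Combined with the decomposition $\Rep GL(L_k) \simeq \Rep \mathbb{G}_m \boxtimes \Ver_p^+(SL(k))$ from \cite{venkatesh_glx_2022}, this identifies $\Rep GL(L_k)_{(r)}$ with $\Rep (\mathbb{G}_m)_{(r)} \boxtimes \Ver_p^+(SL(k))$, a semisimple category whose simples are indexed by pairs (character in $\mathbb{Z}/p^r\mathbb{Z}$, simple in $\Ver_p^+(SL(k))$). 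Assembling over $i$ gives the claim for $T_{(r)}$ together with the isomorphism criterion $\lambda - \mu \in p^r \Lambda(T)$.

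For $B = B(X)$, one has an exact sequence of Harish--Chandra pairs $1 \to U(X) \to B(X) \to T(X) \to 1$, where $U(X)$ has ordinary part the upper unitriangular subgroup of $\prod_k GL(n_k)$ and Lie algebra $\mathfrak{n}^+ := \bigoplus_{i < j} X_i \otimes X_j^*$, and this descends to Frobenius kernels. The isomorphism $\mathcal{O}(U(X)_{(r)}) \cong \mathcal{O}((U(X)_0)_{(r)}) \otimes S(\mathfrak{n}^+_{\neq 0}{}^*)$ of Proposition \ref{frob_kernel_props}(3) exhibits $\mathcal{O}(U(X)_{(r)})$ as finite-dimensional with nilpotent augmentation ideal, so $U(X)_{(r)}$ is infinitesimal unipotent; a standard fixed-point argument then shows every $U(X)_{(r)}$-module has a nonzero subobject of invariants, making the trivial module the only simple. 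Hence every simple $B_{(r)}$-module is inflated from a simple $T_{(r)}$-module along $B_{(r)} \twoheadrightarrow T_{(r)}$, giving the same classification.

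Finally, for $G_{(r)}$ I would run the Verma-module construction. For a simple $B_{(r)}$-module $N := L^{B}_{(r)}(\lambda, V)$, set
\[
M_{(r)}(\lambda, V) := \Dist(G_{(r)}) \otimes_{\Dist(B_{(r)})} N,
\]
which by PBW and Proposition \ref{frob_kernel_props}(3) is isomorphic as a left $\Dist(N^-_{(r)})$-module to $\Dist(N^-_{(r)}) \otimes N$; in particular it is finite-dimensional, its $T_{(r)}$-isotypic decomposition contains $N$ with multiplicity one at the top, and every other isotypic lies at weights strictly below $(\lambda, V)$ in the partial order generated by $\Phi^+$. Hence any proper submodule avoids the top component, the sum of all proper submodules is proper, and $M_{(r)}(\lambda, V)$ admits a unique simple quotient $L_{(r)}(\lambda, V)$. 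Conversely, for any simple $G_{(r)}$-module $L$ the subobject of $U(X)_{(r)}$-invariants is nonzero (as $U(X)_{(r)}$ is infinitesimal unipotent) and $B_{(r)}$-stable, so contains some simple $B_{(r)}$-summand $N = L^B_{(r)}(\lambda, V)$, and Frobenius reciprocity then produces a nonzero, hence surjective, map $M_{(r)}(\lambda, V) \twoheadrightarrow L$ identifying $L$ with $L_{(r)}(\lambda, V)$. The main obstacle is the bookkeeping needed to verify that in $\Dist(N^-_{(r)}) \otimes N$ all $T_{(r)}$-weights other than the top one are strictly below $(\lambda, V)$ in the appropriate partial order; this needs an explicit description of $\Dist(N^-_{(r)}) \cong \Dist((N^-_0)_{(r)}) \otimes S(\mathfrak{n}^-_{\neq 0})$ together with the fact that every root space $X_i \otimes X_j^*$ appearing has $j < i$, so that each generator strictly decreases the weight along a simple root in $\Delta$.
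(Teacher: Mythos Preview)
Your proposal is correct and follows essentially the same route as the paper: factor $T_{(r)}$ via $GL(L_k) \cong \mathbb{G}_m \times SL(L_k)$ (with $SL(L_k)$ finite and hence equal to its own Frobenius kernel), extend trivially to $B_{(r)}$, and then run the standard Verma-module construction for $G_{(r)}$. Your write-up is in fact more detailed than the paper's, which compresses the $B_{(r)}$ step to the phrase ``trivially extend'' and the $G_{(r)}$ step to a single sentence about quotienting the induced module by its maximal submodule; the bookkeeping concern you flag about the partial order on $T_{(r)}$-weights is handled implicitly in the paper by the remark that all group schemes involved are finite, so the usual highest-weight argument from Jantzen carries over without integrability issues.
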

\begin{proof}
    The proof of this proposition is a straightforward generalization of the ordinary theory of representations of Frobenius kernels of algebraic groups. First, by \cite[Corollary 4.2]{venkatesh_glx_2022} we know that $T(X) \cong \prod_{k=1}^{p-1} GL(L_k)^{\times n_k}$, with $GL(L_k) = \mathbb{G}_m \times SL(L_k)$, where $SL(L_k)$ is the finite group scheme with coordinate ring $U(\mathfrak{sl}(L_k))^*$ (this is finite-dimensional by the PBW theorem). It follows that $T_{(r)}$ is isomorphic to the product of group schemes
    
    \[\prod_{k=1}^{p-1}((\mathbb{G}_m)_{(r)} \times SL(L_k))^{\times n_k},\]
    The claim for $T_{(r)}$ then follows from the description of $\Rep SL(L_i)$ from \cite[Corollary 4.8]{venkatesh_glx_2022} and the ordinary theory of representations of Frobenius kernels of $\mathbb{G}_m$. Finally, we can trivially extend from from $T_{(r)}$ to $B_{(r)}$ to get the claim for $B_{(r)}$. Call this module $(\lambda, V)$. Then, we consider the coinduced module $\Dist(G_{(r)}) \otimes_{\Dist(B_{r})} (\lambda, V)$ and quotient out by the maximal submodule, which shows the claim for $G_{(r)}$. Notice that there are no issues with integrability as all group schemes here are finite.
\end{proof}

\section{Proof of The Steinberg Tensor Product Theorem}\label{sec:proof}
We are now ready to prove the Steinberg tensor product theorem for $G = GL(X)$ with $X = \bigoplus_{k=1}^{p-1}L_k^{\oplus n_k} = \bigoplus_{i=1}^n X_k$ as in \ref{sec:glx}. Let us also retain the notations from subsection \ref{sec:glx} and subsection \ref{sec:frob_reps}. In particular recall $T, B, N^-$, $\mathfrak{t}, \mathfrak{b}, \mathfrak{n}^-, \Lambda(T), \Lambda(T)_+, \Lambda_1(T)$ and the notations for the root system $\Phi$.

\begin{lem}\label{lem:factor}
    Let $H$ be an affine group scheme in $\Ver_p$ such that $H_0$ is an ordinary algebraic group defined over the prime subfield of $\KK$. Then, $H = H_{0}H_{(1)}$ and $H_r = (H_{0})_{(r)}H_{(1)}$ for all $r \geq 1$.
\end{lem}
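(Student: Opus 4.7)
The plan is to invoke the Harish-Chandra pair theorem (Theorem \ref{hcpairs_theorem}) to convert each claimed equality of closed subgroups of $H$ into an equality of Harish-Chandra pairs, which can then be read off directly from the structure theory of Frobenius kernels. The main structural inputs are from Proposition \ref{frob_kernel_props}: $(H_{(r)})_0 = (H_0)_{(r)}$, $\Lie(H_{(r)}) = \Lie(H)$, and $H_{(r)}$ is normal in $H$.

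First, I would note that because $H_{(1)}$ is normal, both $H_0 H_{(1)}$ and $(H_0)_{(r)} H_{(1)}$ are genuine closed subgroups of $H$---they can be realized as the preimages in $H$, under the quotient $H \twoheadrightarrow H/H_{(1)}$, of the closed subgroups $H_0$ and $(H_0)_{(r)}$ of the ordinary quotient group $H/H_{(1)}$. To establish $H = H_0 H_{(1)}$, I would then compute the Harish-Chandra pair of the right-hand side: its ordinary part is $H_0 \cdot (H_{(1)})_0 = H_0 \cdot (H_0)_{(1)} = H_0$ since $(H_0)_{(1)} \subseteq H_0$, and its Lie algebra is $\Lie(H_0) + \Lie(H_{(1)}) = \Lie(H_0) + \Lie(H) = \Lie(H)$. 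This is exactly the Harish-Chandra pair of $H$, so the two closed subgroups coincide. Repeating the computation with $H_0$ replaced everywhere by $(H_0)_{(r)}$, and using the containment $(H_0)_{(1)} \subseteq (H_0)_{(r)}$, yields the pair $((H_0)_{(r)}, \Lie(H))$ for $(H_0)_{(r)} H_{(1)}$; this matches the Harish-Chandra pair of $H_{(r)}$ by Proposition \ref{frob_kernel_props}(1), giving the second equality.

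No serious obstacle is expected: the proof is essentially formal once the Harish-Chandra pair framework is in hand, paralleling Masuoka's treatment in the super setting referenced in the proof of Proposition \ref{frob_kernel_props}. Interestingly, the hypothesis that $H_0$ is defined over the prime subfield of $\KK$ does not appear to be invoked in this particular argument; its role is presumably to guarantee a canonical identification $H_0^{(1)} \cong H_0$ when the lemma is later applied, so that $\Fr_+$ may be viewed as an endomorphism of $H_0$ rather than a morphism into a twist.
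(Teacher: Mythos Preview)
Your argument is correct. The paper's own proof is simply a one-line citation to \cite{binwang_orthosymplectic}, asserting that the super case carries over verbatim, so your Harish-Chandra pair computation is in fact more explicit than what the paper provides; it is very likely the same argument as in the cited reference, just spelled out. One minor suggestion: rather than computing $(H_0 H_{(1)})_0$ as the product $H_0 \cdot (H_{(1)})_0$, it is cleaner to argue by containment in both directions---$H_0 H_{(1)}$ contains $H_0$ and $H_{(1)}$, forcing its pair to contain $(H_0,\Lie(H))$, while it is a subgroup of $H$, forcing the reverse inclusion. Your observation about the prime-subfield hypothesis is also on the mark: it plays no role in the lemma itself and is only needed downstream to identify $H_0^{(1)}$ with $H_0$.
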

\begin{proof}
    This is the statement of \cite[Lemma 2.6]{binwang_orthosymplectic}, except for supergroups. However, the proof carries through identically to our setting.
\end{proof}
Of course, $G$ satisfies the hypothesis of Lemma \ref{lem:factor} as $G_0$ is just a product of general linear groups. Now, we can proceed as in \S4 of \cite{kujawa2006steinberg}; the following two lemmas are the same as Lemmas 4.1 and 4.2 in \textit{loc. cit.}, respectively, and their proofs carry through almost identically.

\begin{lem}\label{lem:completely_reducible}
    Let $L$ be an irreducible $G$-module. Then $L$ is completely reducible as a $G_{(1)}$-module.
\end{lem}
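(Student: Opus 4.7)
The plan is to adapt the standard Clifford-theoretic argument from \cite[Lemma 4.1]{kujawa2006steinberg} to the setting of affine group schemes in $\Ver_p$. Let $S \subseteq L$ denote the $G_{(1)}$-socle of $L$, i.e.\ the sum of all simple $G_{(1)}$-subobjects of $L$, equivalently the largest semisimple $G_{(1)}$-submodule. I would establish that (a) $S$ is nonzero and (b) $S$ is stable under the full $G$-action on $L$; since $L$ is simple as a $G$-module, the combination forces $S = L$, which is the desired complete reducibility over $G_{(1)}$.

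For (a), I would invoke Proposition~\ref{frob_kernel_props}(4): $G_{(1)}$ is infinitesimal, so $\Dist(G_{(1)}) \cong U^{[p]}(\Lie(G))$ is a finite-dimensional associative algebra in $\Ver_p$. The restriction to $G_{(1)}$ of the finite-dimensional simple $G$-module $L$ (finite-dimensionality following from the classification of simple $GL(X)$-modules recalled in \S\ref{sec:glx}) is then a nonzero finite-length $\Dist(G_{(1)})$-module, which forces the existence of a simple $G_{(1)}$-subobject of $L$.

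For (b), the key input is that $G_{(1)}$ is a normal subgroup of $G$, by Proposition~\ref{frob_kernel_props}(2). Via the Harish-Chandra pair dictionary of Theorem~\ref{hcpairs_theorem}, $G_{(1)}$ corresponds to the normal pair $((G_0)_{(1)}, \Lie(G))$ inside $(G_0, \Lie(G))$, and by Lemma~\ref{lem:factor} one has the factorization $G = G_0 G_{(1)}$. Since $S$ is $G_{(1)}$-stable by construction, the $G$-stability of $S$ reduces to $G_0$-stability. The conjugation action of $G_0$ on its normal subgroup $G_{(1)}$ induces automorphisms of $\Dist(G_{(1)})$ that permute the simple $G_{(1)}$-modules, and in particular carry any semisimple $G_{(1)}$-subobject of $L$ to a semisimple $G_{(1)}$-subobject. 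Hence the image of $S$ under the $G_0$-action is again a semisimple $G_{(1)}$-subobject of $L$, and by maximality of $S$ it is contained in $S$.

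The main obstacle is making the conjugation step in (b) rigorous in the categorical setting, where the elementwise identity $h(gv) = g(g^{-1}hg)v$ is not directly available. I would handle this either through the comodule formalism, by using compatibility of the $\mathcal{O}(G_{(1)})$-coaction on $L$ with the coproduct of $\mathcal{O}(G)$, or, more economically, by working entirely within the Harish-Chandra pair framework, where the required $G_0$-stability of the notion ``$G_{(1)}$-subobject of a $G$-module'' is built into the fact that $(G_0)_{(1)}$ is a characteristic subgroup of $G_0$ and $\Lie(G)$ is an ideal of itself. Either approach reduces to routine bookkeeping once normality is invoked, and the argument carries through just as in the ordinary and super cases.
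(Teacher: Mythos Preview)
Your proposal is correct and follows essentially the same Clifford-theoretic argument as the paper, which simply defers to \cite[Lemma~4.1]{kujawa2006steinberg}. Your added remarks on making the conjugation step rigorous in $\Ver_p$ via Harish-Chandra pairs or the comodule formalism are appropriate and are exactly the kind of routine bookkeeping the paper implicitly assumes carries over.
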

\begin{proof}
    See \cite[Lemma 4.1]{kujawa2006steinberg}.
\end{proof}
For the simple $T$-module $(\lambda,V)$, recall that the corresponding weight space of $L$ is denoted by $L_{(\lambda, V)}$, where $\lambda \in \Lambda(T)$ and $V \in \Irr\left({\bigboxtimes}_{k=1}^{p-1}\Ver_p^+(SL(k))^{\boxtimes n_k}\right)$. Then, we have the following lemma:
\begin{lem}\label{lem:frob_mod_generator}
    Let $(\lambda, V) \in \Lambda(T)_+ \times \Irr\left({\bigboxtimes}_{k=1}^{p-1}\Ver_p^+(SL(k))^{\boxtimes n_k}\right)$. Then, the $G_{{1}}$-module $\Dist(G_1)L(\lambda, V)_{(\lambda, V)}$ is isomorphic to $L_{(1)}(\lambda, V)$. 
\end{lem}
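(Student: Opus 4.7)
The plan is to mimic the proof of Lemma 4.2 in \cite{kujawa2006steinberg}. Set $M := \Dist(G_{(1)}) \cdot L(\lambda, V)_{(\lambda, V)}$, which is a $G_{(1)}$-submodule of $L(\lambda, V)$.

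First, I would observe that as a $T$-module the highest weight space $L(\lambda, V)_{(\lambda, V)}$ is a single copy of the simple $T$-module $(\lambda, V)$; this is read off from the construction of $L(\lambda, V)$ as the simple quotient of a parabolically induced module in \cite{venkatesh_glx_2022}. Next, $L(\lambda, V)_{(\lambda, V)}$ is annihilated by $\mathfrak{n}^+ := \bigoplus_{1 \le i < j \le n} X_i \otimes X_j^*$, the nilpotent radical of $\mathfrak{b}(X)$, since any positive root vector would shift the $T_0$-weight strictly above $\lambda$ and hence outside the set of weights of $L(\lambda, V)$. Therefore $L(\lambda, V)_{(\lambda, V)}$ extends to a $B_{(1)}$-submodule isomorphic to $(\lambda, V)$ with $\mathfrak{n}^+$ acting trivially, and Frobenius reciprocity produces a $G_{(1)}$-equivariant map from the baby Verma module
\[Z_{(1)}(\lambda, V) := \Dist(G_{(1)}) \otimes_{\Dist(B_{(1)})} (\lambda, V) \longrightarrow L(\lambda, V),\]
whose image is exactly $M$.

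Next I would show that $M$ is indecomposable. The module $Z_{(1)}(\lambda, V)$ is generated as a $G_{(1)}$-module by its top weight space, which is a single copy of the simple $T$-module $(\lambda, V)$; any nontrivial direct-sum decomposition of $Z_{(1)}(\lambda, V)$ would force one summand to contain the entire top and so to equal $Z_{(1)}(\lambda, V)$. Hence $Z_{(1)}(\lambda, V)$ is indecomposable, and so is the quotient $M$. On the other hand, by Lemma \ref{lem:completely_reducible} the restriction of $L(\lambda, V)$ to $G_{(1)}$ is completely reducible, and therefore so is its submodule $M$. An indecomposable completely reducible module is simple, so $M$ is a simple $G_{(1)}$-module. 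Finally, Proposition \ref{prop:frob_kernel_mods} identifies $L_{(1)}(\lambda, V)$ as the unique simple quotient of $Z_{(1)}(\lambda, V)$; since $M$ is a nonzero simple quotient, we conclude $M \cong L_{(1)}(\lambda, V)$.

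The main technical point is justifying the two ``single isotypic copy'' claims: that $L(\lambda, V)_{(\lambda, V)}$ and the top of $Z_{(1)}(\lambda, V)$ each consist of exactly one copy of the simple $T$-module $(\lambda, V)$. These are the Verlinde analogs of the standard one-dimensionality of the highest weight space, but here $(\lambda, V)$ is a genuine object of $\Ver_p$ rather than a character, so the arguments must be read off from the parabolic induction in \cite{venkatesh_glx_2022} and from the proof of Proposition \ref{prop:frob_kernel_mods} rather than from a dimension count.
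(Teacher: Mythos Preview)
Your approach matches the paper's, which simply refers the reader to Lemma~4.2 of \cite{kujawa2006steinberg}; you have faithfully reconstructed that argument in the Verlinde setting, including the use of Lemma~\ref{lem:completely_reducible} and the baby Verma module from Proposition~\ref{prop:frob_kernel_mods}.

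One small logical slip to fix: the sentence ``Hence $Z_{(1)}(\lambda, V)$ is indecomposable, and so is the quotient $M$'' is not a valid inference in general, since quotients of indecomposable modules can decompose. The repair is immediate and you already have the ingredients: either apply your ``generated by a single copy of $(\lambda,V)$'' argument directly to $M$ rather than to $Z_{(1)}(\lambda,V)$, or observe that $Z_{(1)}(\lambda,V)$ has \emph{simple head} $L_{(1)}(\lambda,V)$ (this is what Proposition~\ref{prop:frob_kernel_mods} gives), so every nonzero quotient also has simple head and is therefore indecomposable. In fact the second route lets you skip the indecomposability step entirely: $M$ is a nonzero semisimple quotient of a module with simple head $L_{(1)}(\lambda,V)$, hence $M\cong L_{(1)}(\lambda,V)$.
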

\begin{proof}
    See \cite[Lemma 4.2]{kujawa2006steinberg}.
\end{proof}
Now, we need to prove the analog to \cite[Lemma 4.3]{kujawa2006steinberg}. We will follow the proof in \textit{loc. cit.} but adapt it for our setting. First, we need some setup. The distribution algebra of the underlying even part is $\Dist(G_0)$ with $\Lie(G)$ isomorphic to the Lie algebra $\bigoplus_{k=1}^{p-1} \mathfrak{gl}(n_k)$. Then, the classical theory tells us that the set 

\[\mathcal{A} \coloneqq \{e_{-\alpha}^{(r)} : \alpha \in \Delta, r \in \Z_{> 0}\}\]
of divided powers $e_{-\alpha}^{(r)} \coloneqq e_{-\alpha}^r/r!$ for the associated root vectors $e_{-\alpha} \in \mathfrak{n}^-_0$ for $\alpha \in \Delta$ and $r \in \Z_{> 0}$ along with $\Dist(B_0)$ generate $\Dist(G_0)$.
\par
Now, recall the set of $p$-restricted weights $\Lambda_1(T)$ of $T_0$ from \S\ref{sec:glx}. The isomorphism classes of simple $G_{(1)}$-modules are in bijection with $\Lambda_1(T) \times \Irr\left({\bigboxtimes}_{k=1}^{p-1}\Ver_p^+(SL(k))^{\boxtimes n_k}\right)$.

\begin{lem}\label{lem:same}
    For $(\lambda, V) \in \Lambda_1(T) \times \Irr\left({\bigboxtimes}_{k=1}^{p-1}\Ver_p^+(SL(k))^{\boxtimes n_k}\right)$, the simple $G$-module $L(\lambda, V)$ is simple upon restriction to $G_{(1)}$ and moreover is $L_{(1)}(\lambda, V)$.
\end{lem}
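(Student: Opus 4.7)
The plan is to adapt Kujawa's proof of \cite[Lemma~4.3]{kujawa2006steinberg} to $\Ver_p$. Let $W \coloneqq L(\lambda, V)_{(\lambda, V)}$ denote the highest weight subobject of $L \coloneqq L(\lambda, V)$; by Lemma~\ref{lem:frob_mod_generator} the $G_{(1)}$-submodule $\Dist(G_{(1)}) W$ is already isomorphic to $L_{(1)}(\lambda, V)$, so it suffices to prove $L = \Dist(G_{(1)}) W$. First I would use simplicity of $L$ together with the triangular decomposition to write $L = \Dist(G) W = \Dist(N^-) W$; here $\Dist(B) W = W$ because $B$ preserves the highest weight subobject (divided powers of $\mathfrak{n}^+$-root vectors raise weights and therefore annihilate $W$, while $T$ preserves $W$ by definition of weight space). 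The task thereby reduces to showing $\Dist(N^-) W \subseteq \Dist(N^-_{(1)}) W$.

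Next I would establish a factorization $\Dist(N^-) = \Dist(N^-_{(1)}) \cdot A$ as left $\Dist(N^-_{(1)})$-modules, where $A \subseteq \Dist(N^-_0)$ is the subalgebra generated by the higher divided powers $\{e_{-\alpha}^{(p^k)} : \alpha \in \Delta \cap \Phi_0,\ k \geq 1\}$. This combines the classical factorization $\Dist(N^-_0) = \Dist((N^-_0)_{(1)}) \cdot A$ with the decomposition $\Dist(N^-) \cong \Dist(N^-_0) \otimes S(\mathfrak{n}^-_{\neq 0})$ dual to Proposition~\ref{frob_kernel_props}(3), together with the containment $S(\mathfrak{n}^-_{\neq 0}) \subseteq \Dist(N^-_{(1)})$. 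Granted this factorization, it is enough to show $A \cdot W = W$, i.e., that the augmentation ideal of $A$ annihilates $W$. A simple observation closes this step: once each single generator of $A$ annihilates $W$, any monomial of positive degree in the generators annihilates $W$ automatically, since its rightmost factor already does.

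It remains to verify that $e_{-\alpha}^{(p^k)} \cdot W = 0$ for each $\alpha \in \Delta \cap \Phi_0$ and $k \geq 1$. For this I would restrict $L$ to the $SL(2)$-subgroup of $G_0$ whose Lie algebra is the $\mathfrak{sl}_2$-triple $\langle e_\alpha, e_{-\alpha}, h_\alpha\rangle$. Then $W$ is $SL(2)$-highest weight of weight $c \coloneqq \langle \lambda, \alpha^\vee\rangle$, and the assumption $\lambda \in \Lambda_1(T)$ forces $0 \leq c < p$. Hence the $SL(2)$-submodule generated by $W$ is a quotient of the Weyl-module construction $V_{SL(2)}(c) \otimes W$, and since $V_{SL(2)}(c)$ is already simple of dimension $c+1$ whenever $c < p$, we conclude $e_{-\alpha}^{(r)} W = 0$ for every $r \geq c+1$, in particular for $r = p^k \geq p$ when $k \geq 1$. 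I expect the main technical obstacle to be the factorization $\Dist(N^-) = \Dist(N^-_{(1)}) \cdot A$: while its classical analogue for $\Dist(N^-_0)$ is standard, adapting it to the Verlinde setting requires commuting generators of $A$ past $S(\mathfrak{n}^-_{\neq 0})$, and the resulting brackets must be controlled by an induction on the total divided-power degree. Once that is in hand, the remainder of the argument proceeds essentially as in \cite{kujawa2006steinberg}.
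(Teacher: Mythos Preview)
Your proposal is correct and follows essentially the same route as the paper: both arguments reduce to $\Dist(N^-_{(1)})W$, use the $SL(2)$-restriction together with $\lambda\in\Lambda_1(T)$ to kill the action of high divided powers $e_{-\alpha}^{(r)}$ on $W$, and handle the interaction with $S(\mathfrak{n}^-_{\neq 0})$ by an induction on degree that commutes these divided powers past the mixed part. The only difference is packaging: you isolate an abstract factorization $\Dist(N^-)=\Dist(N^-_{(1)})\cdot A$ before acting on $W$, whereas the paper proves directly that $M=\Dist(N^-_{(1)})W$ is stable under the set $\mathcal{A}$ of divided powers, but the inductive commutation step you flag as the ``main technical obstacle'' is exactly the content of the paper's inductive step.
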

\begin{proof}
    Let $M = \Dist(G_{(1)})L(\lambda, V)_{(\lambda, V)}$. By Lemma \ref{lem:frob_mod_generator}, $M \cong L_{(1)}(\lambda, V)$, so we just need to show that $M = L(\lambda, V)$, which can be done by showing $M$ is $\Dist(G)$-invariant and therefore all of $L(\lambda, V)$ by irreducibility.
    \par
    Because $\Dist(G)$ is generated by $\Dist(G_{(1)})$ and $\Dist(G_0)$, it suffices to show that $M$ is $\Dist(G_0)$-invariant. Now, note that $\Dist(G_0)$ is generated by $\Dist(B_0)$ and $\mathcal{A}$. The group $G_{(1)}$ is normal in $G$, so $L(\lambda, V)_{(\lambda, V)}$ being a $B_0$-submodule of $L(\lambda, V)$ implies that $M$ is also $B_0$-stable and therefore $\Dist(B_0)$-stable. Therefore, as in the ordinary setting, we just need to show that $M$ is $\mathcal{A}$-stable.
    \par
    Now, by the PBW theorem, $\Dist(G_{(1)})$ is free over $\Dist(B_{(1)})$ and therefore 
    
    \[\Dist(G_{(1)})L(\lambda, V)_{(\lambda, V)} = \Dist(N^-_{(1)})L(\lambda, V)_{(\lambda, V)}.\] 
    Moreover as $\Dist((N^-_0)_{(1)})$-modules $\Dist(N^-_{(1)}) =  \Dist((N^-_0)_{(1)})\otimes S(\mathfrak{n}^-_{\neq 0})$. Now, $\Dist((N^-_0)_{(1)})$ is generated by the divided powers $e_{-\beta}^{(r)}$ of the root vector $e_{-\beta}$ in $\gl(X)_0$ for $\beta \in \Phi_0^+$ and $r \in \{0,1,2,\dots, p-1\}$ and has a basis given 
    
    \[\left\{\prod_{\alpha \in \Phi_0^+} e_{-\beta}^{(r_\beta)}\right\} \]
    for $r_\beta \in \{0,1,2,\dots, p-1\}$ and where the product is with respect to some fixed ordering on $\Phi^+$. Let the degree of such a monomial in this basis be the sum of all the $r_{\beta}$. This endows $\Dist((N^-_0)_{(1)})$ with a grading. Furthermore, $S(\mathfrak{n}^-_{\neq 0})$ has the usual grading. This in totality gives us a grading on $\Dist(N^-_{(1)})$, and we have that $M$ is spanned by subobjects of the form $Y.L(\lambda, V)_{(\lambda, V)}$ where $Y$ in $\Dist(N^-_{(1)})$ is a subobject of degree $d$ and the dot indicates the action of $Y$ on $L(\lambda, V)_{(\lambda, V)}$.
    \par
    Our goal is to now show that the action of $e_{-\alpha}^{(r)} \in \mathcal{A}$  for $r \in \Z_{>0}$ and $\alpha \in \Delta$ on $Y.L(\lambda, V)_{(\lambda, V)}$ will land back in $M$. We will do so by induction on the degree of $Y$, where $Y$ has degree $d$. First of all, suppose the degree of $Y$ is zero. Then, we want to show that $e_{-\alpha}^{(r)}.L(\lambda, V)_{(\lambda, V)}$ is in $M$. This follows from using the representation theory of $SL(2)$ as in the proof of \cite[Lemma 4.3]{kujawa2006steinberg}. So the base case is done.
    \par
    Now, suppose that the degree of $Y$ is $d > 0$. First, let us assume that $Y \subseteq \un \otimes S^d(\mathfrak{n}_{\neq 0}^-)$ under the identification of $\Dist((N^-)_{(1)})$ with  $\Dist((N^-_0)_{(1)}) \otimes S^d(\mathfrak{n}_{\neq 0}^-)$. But then because $S(\mathfrak{n}_{\neq 0}^-)$ is a finite-dimensional $N^-_0$-module, it is clear by the PBW theorem that commuting the divided power $e^{(r)}_{-\alpha}$ past $Y$ will give some other terms in $S^d(\mathfrak{n}_{\neq 0}^-)$ and potentially lower degree terms. Then, we see that $e^{(r)}_{-\alpha}.Y.L(\lambda, V)_{(\lambda, V)}$ is in $M$ by the inductive hypothesis.
    \par
    Finally, the general case of $Y \subseteq \left(\KK\prod_{\beta \in \Phi^+_0}e_{-\beta}^{(r_\beta)}\right) \otimes S^d(\mathfrak{n}_{\neq 0}^-)$ (w.r.t. some fixed ordering on $\Phi_0^+$) follows from combining the previous case with the ordinary setting (again, see \cite{kujawa2006steinberg}). This shows the claim.
\end{proof}
Because $\Fr_+$ gives a group homomorphism from $G$ to $G_0$, any $G_0$-module is also a $G$-module via $\Fr_+$. This gives us a functor $\Fr_{+,\uparrow}: \Rep G_0 \rightarrow \Rep G$. The functor $\Fr_{+,\uparrow}$ has a right-adjoint $\Rep G \rightarrow \Rep G_0$ given by taking $G_{(1)}$-fixed points, which is a consequence of Lemma \ref{lem:factor} (the reasoning given in \cite{kujawa2006steinberg} carries over to our setting). Now, we prove the main theorem of this paper. 

\maintheorem*

\begin{proof}
    The argument in \cite{kujawa2006steinberg} essentially goes through. For 
    
    \[(\lambda, V) \in \Lambda_1(T) \times \Irr\left({\bigboxtimes}_{k=1}^{p-1}\Ver_p^+(SL(k))^{\boxtimes n_k}\right),\]
    we have that $L(\lambda, V)$ is simple as a $G_{(1)}$-module by Lemma \ref{lem:same}. Furthermore, 

    \[H \coloneqq \Hom_{G_{(1)}}(L(\lambda, V), L(\lambda + p\mu, V)) \neq 0\]
    by Lemma \ref{lem:frob_mod_generator} and Proposition \ref{prop:frob_kernel_mods}. Now, $H$ is a $G$-module because $G$ acts on $G_{(1)}$ by conjugation. We can define a $G$-module homomorphism $H \otimes L(\lambda, V) \rightarrow L(\lambda + p\mu, V)$ given by evaluation. This map is surjective because $H \neq 0$ and $L(\lambda + p\mu, V)$ is irreducible. Now, for an object in $\Ver_p$, let the function $\ell: \Ver_p \rightarrow \Z_{\geq 0}$ denote its length (for an ordinary vector space, this is just its dimension). Computing lengths, we have

    \begin{align*}
        \ell(H\otimes L(\lambda, V)) &= \dim_\KK H \cdot \ell(L(\lambda, V)) \\
        &\leq \frac{\ell(L(\lambda + p\mu,V))}{\ell(L(\lambda, V))}\ell(L(\lambda, V)) \\
        &= \ell(L(\lambda + p\mu, V)).
    \end{align*}
    The first line follows because $H$ is an ordinary vector space. The second line follows from combining Schur's Lemma with the facts that $L(\lambda, V)$ is a simple $G_{(1)}$-module and $L(\lambda + p\mu, V)$ is completely reducible as a $G_{(1)}$-module by Lemma \ref{lem:completely_reducible}. In other words, the hom-space multiplicity is how many times $L(\lambda, V)$ appears in a decomposition of $L(\lambda + p\mu, V)$ into a direct sum of simples, which is obviously bounded by ratio of lengths. We deduce the map is an isomorphism as it is a surjection between two objects of the same length.

    Since $G_{(1)}$ acts trivially on $H$, it must be the inflation of some $G_0$-module $M$ via $\Fr_{+,\uparrow}$ as taking $G_{(1)}$-fixed points is right-adjoint to $\Fr_{+,\uparrow}$. This module $M$ is irreducible because $L(\lambda + p\mu, V)$ is irreducible. Finally, we have that
    
    \begin{align*}
        H = \Hom_{G_{(1)}}(L(\lambda, V), L(\lambda + p\mu, V)) &= \Hom_{\mathcal{C}}(\un, \underline{\mathrm{Hom}}_{G_{(1)}}(L(\lambda, V), L(\lambda + p\mu, V))) \\
        &= \Hom_{\mathcal{C}}(\un, L(\lambda+p\mu, V) \otimes L(\lambda, V)^*),
    \end{align*}
    which as a $G_0$-module has highest weight $p\mu$. Here, $\underline{\mathrm{Hom}}_{G_{(1)}}$ denotes the internal hom of $\Rep G_{(1)}$, and the $*$ in the superscript denotes the dual of a module. It follows that $M \cong L_0(\mu)$, and we are done.

\end{proof}

We expect analogs of this theorem to be true for orthogonal, symplectic, queer, and periplectic groups in $\Ver_p$, but we have not classified their representations yet. In fact, we should be able to prove this theorem for a large class of affine group schemes in $\Ver_p$.  Let us call a representation of an affine group scheme $G$ \textit{one-irreducible} if its restriction to $G_{(1)}$ is irreducible. We state the following conjecture:

\begin{conj}
Let $G$ be an affine group scheme with reductive even subgroup $G_0$. Any irreducible representation of $G$ is uniquely a tensor product of a one-irreducible representation and one inflated from $G_0$.
\end{conj}
If this is not true, what further conditions need to be imposed on $G$ to make it true?
\printbibliography

\end{document}